\documentclass{amsart}[11pt]
\usepackage{graphicx}
\usepackage{stmaryrd}
\usepackage{enumitem}
\usepackage{color}
\numberwithin{equation}{section}
\usepackage{xcolor}


\DeclareMathAlphabet\mathbfcal{OMS}{cmsy}{b}{n}
\vfuzz2pt 
\hfuzz2pt 

    \def\a{\alpha} \def\b{\beta}  \def\({\left (} \def\){\right )}
\def\<{\left\langle} \def\>{\right\rangle}

\newtheorem{thm}{Theorem}[section]
 \newtheorem{lem}[thm]{Lemma}  
\newtheorem{acknowledgement}{Acknowledgement}

\newcommand{\R}{\mathbb R}

\newtheorem{theorem}{Theorem}[]
\newtheorem{Lemma1}{Lemma}[section]    
   
\newtheorem{Proposition}[Lemma1]{Proposition}

\theoremstyle{definition}
\newtheorem{definition}{Definition}[section]

\begin{document}

\title{Finite time blowup of the   $n$-harmonic flow on $n$-manifolds}

\author {Leslie hon-nam Cheung and Min-Chun Hong}

\address{Department of Mathematics, The University of Queensland\\ Brisbane, QLD 4072, Australia}  \email{l.cheung2@uq.edu.au; hong@maths.uq.edu.au}

\begin{abstract}
In this paper, we generalize  the no-neck result of Qing-Tian \cite{QT} to show that there is no neck during blowing up for the $n$-harmonic flow as $t\to\infty$. As an application of the no-neck result, we settle a conjecture of Hungerb\"uhler \cite {Hung} by constructing an example to show that the $n$-harmonic map flow  on  an $n$-dimensional Riemannian manifold blows up  in finite time for $n\geq 3$.
\end{abstract}
\keywords{harmonic maps, finite time blow-up}

\markright {$n$-harmonic flow}

\maketitle

\section{Introduction}

Let $M$ be an $n$-dimensional Riemannian manifold without
boundary, and let $N$ be another
$m$-dimensional compact Riemannian manifold without boundary (isometrically embedded into $\R^L$).  In local coordinates,   a smooth Riemannian metric $g$ of $M$ can be
represented by
\[g = g_{ij}  dx_i\otimes dx_j,\] where $(g_{ij})$ is a positive
definitive symmetric $n\times n$ matrix.
 The volume element of $(M; g)$ is defined by
 \[dv= \sqrt {|g|} dx \quad \mbox{with } |g|=\mbox{det }(g_{ij}).\]
 For a map $u: M\to N\subset \R^L$,  the $n$-energy functional of $u$ is defined by
\[E_n(u; M)=\frac 1 n\int_M  |\nabla u |^n\,dv,\]
where $|\nabla u| $  is the  gradient norm  given by
\[  |\nabla u (x)|^2 = \sum_{\alpha, i,j}g^{ij}(x) \frac {\partial u^{\alpha}}{\partial x_i}\frac {\partial u^{\alpha}}{\partial x_j}\]
with $(g^{ij})=(g_{ij})^{-1}$ the inverse matrix of $(g_{ij})$.
A $C^1$-map $u$ from $M$ to $N$ is said to be an  $n$-harmonic map  if
$u$ is a critical point of the $n$-energy functional; i.e. it satisfies
\begin{equation}\label{har} \frac{1}{\sqrt{|g|}}\frac{\partial}{\partial x_{i}}\left[  |\nabla
u|^{n-2}g^{ij}\sqrt{|g|}\frac{\partial}{\partial x_{j}}u\right]
+|\nabla
u|^{n-2}A(u)(\nabla u,\nabla u)=0\quad  \mbox { in } M,\end{equation}
where $A$ is the second fundamental form of $N$.

In 1964, Eells and Sampson \cite{ES} investigated the existence problem of harmonic maps in a homotopic class; i.e.
``Given a smooth map $u_0:M\to N$, is there a harmonic map $u$, which is homotopic to $u_0$?'' (See \cite{EL}).

For the target manifold $N$ with non-positive sectional curvature, Eells and Sampson \cite{ES} proved the first existence result of  harmonic maps in a homotopic class by introducing the ``heat flow method''. The heat flow method transforms the existence  problem to an evolution problem. Since then, questions on existence and regularity of harmonic maps and their flows have been attracted a great attention (See \cite{EL}). One of the key components of the heat flow method for answering the Eells-Sampson question is to prove existence of a global solution  to the harmonic map flow. In 1975, Hamilton  \cite {Ha} proved local existence of the heat flow of harmonic map; i.e. the solutions of the heat flow of harmonic map exists locally. If the solution exists only in a finite   interval $[0, T_{max})$ with $T_{max}<\infty$ and cannot be extended any further, then we say that the solution blows up in finite time  $T_{max}$. In the two dimensional case (i.e. $n=2$), Struwe \cite{St} proved global existence of a unique  weak solution  to the harmonic map flow, where the  solution is smooth except for a finite set of point singularities.  In 1989, Coron-Ghidaglia \cite{CG} constructed the first example to show that for $n\geq 3$, the harmonic map heat flow from $S^n$ into $S^n$  blows up in finite time.
However,  when $n=2$,  the Dirichlet energy $E_2$ on the $2$-dimensional manifold is conformally invariant on its critical dimension. In addition, H\'elein \cite{H} proved that any weak harmonic map from surfaces is  smooth. Thus, it was widely believed during the time that the harmonic map heat flow would not blow up in finite time on the $2$-dimension manifold. In 1992, Chang, Ding and Ye \cite{CDY} made a breakthrough by constructing a counter-example that harmonic map heat flow on $S^2$  can blow up in finite time.

In higher dimensions (i.e. $n>2$), $E_n$ is also conformally invariant on the $n$-dimensional manifold $M$. Motivated by the Eells-Sampson question on harmonic maps, one can ask whether a given map from an $n$-dimensional manifold to another manifold
can be deformed into an  $n$-harmonic map. Related to this question, Hungerb\"uhler \cite {Hung} studied the $n$-harmonic flow in the following setting:
\begin{equation}\label{n-flow} \frac {\partial u}{\partial t}=\frac{1}{\sqrt{|g|}}\frac{\partial}{\partial x_{i}}\left[  |\nabla
u|^{n-2}g^{ij}\sqrt{|g|}\frac{\partial}{\partial x_{j}}u\right]
+|\nabla
u|^{n-2}A(u)(\nabla u,\nabla u)\end{equation}  and  generalized the result of Struwe \cite{St} by
 proving that there
exists a global weak solution $u: M\times [0,+\infty)\to N$ of (\ref{n-flow}) with initial value $u_0$  such that $u\in
{\bf C}^{1,\alpha }(M\times (0,+\infty )\backslash
\{\Sigma_k\times T_k \}_{k=1}^L)$ for a finite
number of times $\{T_k\}_{k=1}^L$ and a finite number of singular
closed sets $\Sigma_k \subset M$ for $k=1,..., L$  with an integer
$L$, depending only $M$ and $u_0$. However,  it is still  unknown whether the singular set $\Sigma_k$ of the flow (\ref{n-flow}) at the singular $T_k$ is finite. In order to sort out this issue, the second author \cite{Hong} introduced a  rectified $n$-harmonic map flow from an $n$-dimensional from $M$ to $N$ and proved existence of a global
solution, which is regular except for a finite number of points,  of the rectified $n$-harmonic map flow.

Based on the fundamental result of Chang-Ding-Ye \cite{CDY} for $n=2$, it is an interesting question whether the $n$-harmonic flow (\ref{n-flow}) blows up in finite time for $n\geq 3$. Supported by some numerical evidence, Hungerb\"uhler   (\cite{Hungerb1994}, \cite {Hung}) conjectured the phenomenon of finite time blow-up of the $n$-harmonic flow  for $n\geq 3$. Later, Chen, Cheung, Choi and Law \cite{CCCL} followed the method of Chang-Ding-Ye to construct  an example that  the $n$-harmonic flow (\ref{n-flow}) blows up in finite time  for $n=3$. However, due to the nonlinearity and degeneracy of the $n$-harmonic maps, they \cite{CCCL}   also pointed out that their proofs could not be applied to the cases when $n > 3$. Therefore, the  conjecture of Hungerb\"uhler for $n >3$ has remained open since then.

On the other hand, Qing-Tian \cite {QT}  suggested a program to prove the finite time blow-up of the harmonic map flow for $n=2$ through an application of the no-neck result for the harmonic map flow as $t \to \infty$ and  constructing  a special target manifold $N$ with a proper topology.  Recently, Chen-Li \cite {CL}  verified  the Qing-Tian  program by constructing a special target manifold $N$ with a proper topology to show that  the harmonic map flow blows up at finite time for $n=2$.  Later, Liu and Yin \cite{LY} successfully applied this idea to  construct a proper manifold $N$ to show that the bi-harmonic maps flow on $4$-manifolds blows up  at finite time.

In this paper,  we apply the Qing-Tian  program  to   confirm the conjecture of Hungerb\"uhler on the $n$-harmonic map flow. Firstly, we define:

\begin{definition}
$u$ is said to be a regular solution to the $n$-harmonic map flow \eqref{n-flow} in $M\times (0,T]$ if $u\in C^{0}(M \times (0,T] ; N)$ with $T\leq \infty$ is a solution of \eqref{n-flow} satisfying
  \[\int_0^T\int_M \left|\nabla(|\nabla u|^{\frac{n-2}{2}} \nabla u)\right |^2 +|\nabla u|^{2n}\,dv\,dt\leq C (T).\]
\end{definition}

We generalize the no-neck result of Qing-Tian \cite {QT} to the $n$-harmonic map flow as follow:

\begin{theorem}\label{no_neck} Let $u$ be a  regular solution to the flow (\ref {n-flow}) in $M\times [0,\infty)$ with initial value $u_0\in C^1(M, N)$. For a sequence $t_i\to \infty$,
there is a sub-sequence, still denoted by $t_i$, such that as $t_i\to \infty $, $u(x, t_i)$ converges to an $n$-harmonic map $u_{\infty}$  in $C_{loc}^{1,\a} (M\backslash \{x^1, \cdots, x^L\}, N)$ for some positive $\a<1$, where $u_{\infty}$ can be extended  to $C^{1,\a}(M,N)$.
Moreover, we have

\textbf{(i)} (Energy identity) There are a finite number of
$n$-harmonic maps $\omega_{k,l}$ (also called bubbles) on $S^{n}$  for $k=1,\cdots, L$ and $l=1,\cdots, J_k$  such that

\[\lim_{t_i \nearrow \infty }  E_n(u(\cdot ,t_i); M)= E_n(u_{\infty}; M)+\sum_{k=1}^{L}\sum_{l=1}^{J_k}   E_n  (\omega_{k,l}  ; S^{n}). \]

\textbf{(ii)} (No-neck result) There is no neck between the limiting map and bubbles; \\i.e. the image  $$u_{\infty}(M)\cup \bigcup_{k,l}\omega_{k,l}(S^n)$$ is a connected set.
\end{theorem}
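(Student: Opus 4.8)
The plan is to follow the blow-up analysis familiar from the two-dimensional harmonic map flow (Struwe, Qing-Tian, Ding-Tian) but adapted to the degenerate $n$-Laplacian, using the regularity assumption to control the flow. I would proceed in four stages.

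First, I would establish the subconvergence away from finitely many points. From the regular-solution bound $\int_0^\infty\int_M |\nabla(|\nabla u|^{\frac{n-2}{2}}\nabla u)|^2 + |\nabla u|^{2n}\,dv\,dt \le C$ together with the monotonicity (non-increase) of $E_n(u(\cdot,t);M)$ along the flow, one extracts $t_i\to\infty$ along which $\|\partial_t u(\cdot,t_i)\|_{L^2(M)}\to 0$ and $u(\cdot,t_i)$ is bounded in $W^{1,n}$. A small-energy $\varepsilon$-regularity lemma for the $n$-harmonic flow (the $n$-dimensional analogue of Struwe's, which must be available from the cited works of Hungerbühler/Hong, or proven via a Bochner-type estimate for $|\nabla u|^{\frac{n-2}{2}}\nabla u$) shows that wherever $\int_{B_r(x)}|\nabla u(\cdot,t_i)|^n\,dv < \varepsilon_0$, one has uniform $C^{1,\alpha}$ bounds. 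Since total $n$-energy is bounded, concentration can occur at most at a finite set $\{x^1,\dots,x^L\}$ where $\limsup_i \int_{B_r(x^k)}|\nabla u(\cdot,t_i)|^n\,dv \ge \varepsilon_0$ for all $r$. Away from these points, elliptic/parabolic estimates give $u(\cdot,t_i)\to u_\infty$ in $C^{1,\alpha}_{\mathrm{loc}}(M\setminus\{x^1,\dots,x^L\})$, and since $\partial_t u(\cdot,t_i)\to 0$, the limit $u_\infty$ is a weak, hence (by $\varepsilon$-regularity and a removable-singularity theorem for stationary $n$-harmonic maps with finite energy) a genuine $C^{1,\alpha}$ $n$-harmonic map on all of $M$.

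Second, I would carry out the bubbling at each concentration point $x^k$. Working in a small geodesic ball, rescale: choose scales $\lambda_i^{(k,l)}\to 0$ and centers $x_i^{(k,l)}\to x^k$ realizing the concentrated energy, and show $u(x^k + \lambda_i y, t_i)$ converges in $C^{1,\alpha}_{\mathrm{loc}}(\R^n\setminus\{\text{finitely many points}\})$ to a nonconstant $n$-harmonic map $\R^n\to N$ of finite $n$-energy, which by conformal invariance of $E_n$ in dimension $n$ and the removable-singularity theorem extends to an $n$-harmonic map $\omega_{k,l}:S^n\to N$. Iterating (finitely many times, since each bubble carries energy $\ge \varepsilon_0$) exhausts all the concentrated energy and, by the standard "no energy in the neck" argument, gives the energy identity (i). The crucial input here is the \emph{energy quantization / no-neck-energy} estimate: on each dyadic annulus in the neck region the $n$-energy is small and the tangential/radial energies are comparable, so $\int_{\text{neck}}|\nabla u|^n\to 0$. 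This I would prove by a Pohozaev-type identity for the $n$-harmonic flow on annuli combined with a Hardy inequality on the annulus, controlling the error terms by the time-derivative bound from the regular-solution hypothesis.

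Third — and this is the heart of the theorem — I would prove the no-neck conclusion (ii). The argument is to show that for each bubble, the "neck curve" joining $u_\infty(x^k)$ to $\omega_{k,l}(S^n)$ has zero length, so the images touch. Concretely, on the neck annulus $A(\delta, r; R) = B_{\delta R}\setminus B_{r/R}$ one estimates the oscillation of $u(\cdot,t_i)$ by $\mathrm{osc}_{A}\, u \le C\big(\int_{A}|\nabla u|^n\,dv\big)^{1/n} \cdot (\text{log factor})^{1-1/n}$, and the point is that the no-neck-energy estimate from Step 2 must be strong enough — i.e. the energy in dyadic annuli must decay, not merely be small — to beat the logarithmic growth of the number of annuli, forcing $\mathrm{osc}\to 0$ as first $i\to\infty$ then the separation of scales $\to\infty$. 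Following Qing-Tian, this decay comes from a differential inequality for $\theta(s) = \int_{B_{e^{-s}}}|\nabla u|^n$ derived from the Pohozaev identity: $\theta$ cannot stay near a positive constant on a long interval without contradicting finite total energy, which yields geometric decay of the annular energies. Concatenating the oscillation bounds over all annuli then shows the neck collapses to a point, so $u_\infty(M)\cup\bigcup_{k,l}\omega_{k,l}(S^n)$ is connected.

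The main obstacle, as the authors themselves flag regarding the failure of the Chang-Ding-Ye method for $n>3$, is the degeneracy and genuine nonlinearity of the $n$-Laplacian: the Pohozaev identity produces terms involving $|\nabla u|^{n-2}$ that are not uniformly elliptic, and the $\varepsilon$-regularity and Hardy-type estimates that are routine for $n=2$ (where the equation is conformally the standard harmonic map flow) must be reproven in the $W^{1,n}$ setting using the natural quantity $v = |\nabla u|^{\frac{n-2}{2}}\nabla u$, whose $W^{1,2}$-control is exactly what the regular-solution hypothesis supplies. Making the neck-energy decay \emph{quantitative} in this degenerate setting — rather than merely establishing $\int_{\text{neck}}|\nabla u|^n\to 0$ — is the step I expect to require the most care, and it is presumably where the regularity assumption in the definition of regular solution does the essential work.
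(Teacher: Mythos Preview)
Your four-stage outline is the paper's outline: Proposition \ref{Theorem 3} handles Step 1, the bubble-tree and energy identity are quoted from Wang--Wei \cite{WW} as you anticipate, and the no-neck argument does run through a Pohozaev-driven ODE for annular energies followed by a sum of oscillations over dyadic annuli. So the skeleton is right.

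The gap is in Step 3, exactly at the oscillation estimate on a single small-energy ball. The bound you write, $\mathrm{osc}_{A}\, u \le C\big(\int_{A}|\nabla u|^n\big)^{1/n}\cdot(\text{log})^{1-1/n}$, is the generic borderline-Sobolev inequality, and its logarithmic loss is what makes the naive argument fail. Nor does the $\varepsilon$-regularity you invoke rescue it: Lemma \ref{lem:eregularity} gives only $\sup_{B_r}|\nabla u|\le C/r$ under small energy, hence $\mathrm{osc}\le C$ on a dyadic annulus, with no factor of $E^{1/n}$. What the paper supplies instead---and flags as its main technical input---is Lemma \ref{Osc}, the $n$-harmonic generalization of Ding--Tian's Lemma 2.1:
\[
\|u\|_{\mathrm{osc}(B_{r/2})}\le C\Big(\int_{B_r}|\nabla u|^n\Big)^{\frac{1}{2(n-1)}}+Cr^{\frac{n}{2(n-1)}}\Big(\int_{B_r}|\tau(u)|^2\Big)^{\frac{1}{2(n-1)}}.
\]
The mechanism is exactly the one you name in your last paragraph but do not deploy here: the regular-solution hypothesis gives $\nabla(|\nabla u|^{n-2}\nabla u)\in L^2$, so Sobolev--Poincar\'e applied to $|\nabla u|^{n-2}\nabla u$ yields $\nabla u\in L^p$ for some $p>n$, and then Morrey's embedding gives oscillation control with no logarithm. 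This is where your quantity $v=|\nabla u|^{\frac{n-2}{2}}\nabla u$ does its real work---not in the energy-decay step, but in the oscillation bound.

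The second ingredient you are missing is the device that converts the Pohozaev inequality (Lemma \ref{Pohozaev}) into the ODE. The paper does not use a Hardy inequality; it compares $u_i$ on the neck with an explicit radial $n$-harmonic function $h_{i,j,t}(r)$ (a logarithmic interpolation between sphere-averages), exploits the algebraic monotonicity of the $n$-Laplacian to bound $\int_{P_{j,t}}|\nabla(u_i-h_{i,j,t})|^n$, and uses Lemma \ref{Osc} to control $\|u_i-h_{i,j,t}\|_{C^0}$ on the annulus. This isolates the tangential energy, which Pohozaev then turns into the differential inequality $f_j'(t)\ge \tfrac{1}{C}f_j(t)-Ce^{\lambda_n(t-j)}$ for $f_j(t)=\int_{P_{j,t}}|\nabla u_i|^n$ (annular, not ball, energy). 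Integrating and feeding the resulting decay back into Lemma \ref{Osc} on each dyadic shell gives a summable oscillation bound. Your instinct that a Pohozaev-based ODE drives geometric decay is correct; what is missing is Lemma \ref{Osc} and the radial comparison function that makes the ODE close.
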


One of the fundamental rules for bubble blowing is  the bubble-neck decomposition. During the bubbling procedure, the energy identity implies that the energy is conserved. This means that the loss of energy under the limiting process can be recovered by the energy of a finite number of bubbles. Readers can refer to the pioneering work on the energy identity by Jost \cite{J}, Parker \cite{P} regarding the harmonic maps from surfaces and by Ding-Tian \cite{DT} for the harmonic map flow. For $n$-harmonic maps with $n\geq 3$, the isolated singularities are removable due to Duzaar-Fuchs \cite{DF} and the energy identity was provided by Wang-Wei \cite{WW} for a sequence of approximate $n$-harmonic maps. In particular, one can use the standard blow-up argument as in Ding-Tian \cite{DT}  to reduce the multiple  bubble problem to the single bubble case. See more details for the bubble-neck decomposition  of $n$-harmonic maps in \cite{Hong}. These results allow us to construct the bubbling argument in the setting of the $n$-harmonic maps.

In order to provide an example to show that the $n$-harmonic flow can blow up in finite time, the key step is to generalize the no-neck result of Qing-Tian \cite{QT}.  However, those no-neck results in \cite{LZ} and \cite{QT} heavily rely on a key estimate in Ding-Tian's work (Lemma 2.1, \cite{DT}) which only works for the case of harmonic maps. To settle this open problem, we generalize the Ding-Tian estimate to the context of $n$-harmonic maps (Lemma \ref{Osc}) and then apply it to prove the no-neck property for the $n$-harmonic map flow.

Secondly, we apply Theorem  \ref{no_neck} to prove the main result of this paper:
\begin{theorem}\label{main_theorem} Let $X$ be any closed manifold of dimension $m>n$ with
nontrivial $\pi_n(X)$, and let $N=  X\,\#T^m$ be the connected sum of $X$  with the torus
$T^m$. Then there are infinitely many initial maps $u_0 : S^n \to N$ such that the $n$-harmonic map flow \eqref{n-flow} with initial value $u_0$ blows up in finite time.
\end{theorem}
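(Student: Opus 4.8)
The plan is to follow the Qing--Tian program adapted to the $n$-harmonic setting, using Theorem~\ref{no_neck} as the central analytic tool. The topological input is that $N = X\,\#\,T^m$ with $m>n$ and $\pi_n(X)\neq 0$ has the feature that $\pi_n(N)\cong \pi_n(X)\oplus\pi_n(T^m)=\pi_n(X)\neq 0$, so there exist homotopically nontrivial maps $S^n\to N$, while at the same time $N$ admits a metric with \emph{negative} sectional curvature away from a small region (using the $T^m$ summand to build a metric of nonpositive curvature on a large piece, then grafting in the $X$ part through a thin neck); the point of the connected sum with $T^m$ is to ensure that the universal cover of the "large" region is contractible, so that no nonconstant $n$-harmonic sphere can map into it. The key consequence we need is: there is \emph{no nonconstant $n$-harmonic map} $S^n\to N$ in the region of nonpositive curvature, by the standard Eells--Sampson/Bochner argument (which for $n$-harmonic maps gives that such a map is constant, see the removable-singularity and energy-identity machinery cited), and moreover any $n$-harmonic map $S^n\to N$ has $n$-energy bounded below by a fixed constant $\eps_0>0$ (the $\eps$-regularity threshold).

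First I would fix a homotopically nontrivial map $\phi: S^n\to X\hookrightarrow N$ and, by a scaling/concentration construction, produce a family of initial data $u_0$ (in fact infinitely many, parametrized by the concentration scale and by composing with degree-type maps so that the $n$-energy $E_n(u_0;S^n)$ can be made as small as we like while $[u_0]\in\pi_n(N)$ stays nontrivial). The strategy is a proof by contradiction: suppose the $n$-harmonic flow with such initial data is a \emph{global} regular solution on $S^n\times[0,\infty)$. Then Theorem~\ref{no_neck} applies: along a sequence $t_i\to\infty$, $u(\cdot,t_i)\to u_\infty$ in $C^{1,\a}_{loc}$ away from finitely many points, with the energy identity $E_n(u(\cdot,t_i))\to E_n(u_\infty) + \sum_{k,l}E_n(\omega_{k,l};S^n)$ and the no-neck conclusion that $u_\infty(S^n)\cup\bigcup_{k,l}\omega_{k,l}(S^n)$ is connected.

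Next I would extract the contradiction from topology plus the no-neck/energy information. On one hand, because the flow does not increase energy, $E_n(u(\cdot,t))\le E_n(u_0)$, which we arranged to be $<\eps_0$; hence every bubble $\omega_{k,l}$ would have energy $<\eps_0$, forcing (by $\eps$-regularity) that there are \emph{no} bubbles at all, so $u(\cdot,t_i)\to u_\infty$ strongly in $C^1(S^n,N)$ and in particular $u_\infty$ is homotopic to $u_0$, so $[u_\infty]\neq 0$ in $\pi_n(N)$. On the other hand, if $E_n(u_0)$ is small enough, the image of $u_\infty$ together with the flow's trajectory stays in (a neighbourhood of) the nonpositively-curved region of $N$ whose universal cover is contractible; a nonconstant $n$-harmonic map from $S^n$ into such a region must be constant (Bochner-type vanishing for $n$-harmonic maps into nonpositively curved targets, valid since $S^n$ is closed), contradicting $[u_\infty]\neq 0$. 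One therefore cannot have all bubbles trivial \emph{and} $u_\infty$ homotopically nontrivial: so either a bubble with energy $\geq\eps_0$ must form — impossible given $E_n(u_0)<\eps_0$ — or the flow must have ceased to exist, i.e. $T_{\max}<\infty$.

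The main obstacle, and the place requiring genuine care, is the construction of the target metric on $N=X\,\#\,T^m$ that simultaneously (a) keeps $\pi_n$ nontrivial, (b) has a large region of nonpositive (ideally strictly negative) curvature whose universal cover is aspherical/contractible so that it supports no nontrivial $n$-harmonic sphere and no neck can hide energy there, and (c) confines low-energy maps together with their flow evolution to stay inside that region — this is a quantitative "energy-gap implies image-trapping" statement that must be compatible with the $C^{1,\a}$ convergence and the no-neck conclusion of Theorem~\ref{no_neck}. A secondary subtlety is verifying that the initial maps can be chosen with arbitrarily small $n$-energy while remaining homotopically essential; this uses that $\pi_n(N)$ being nontrivial lets one realize a generator by a map supported in a small ball, then precompose with a dilation of $S^n$ — but one must check that in the $n$-energy (which is conformally invariant in dimension $n$) this does \emph{not} automatically make the energy large, so the construction should instead spread a fixed essential map over a long thin "finger" running into the $X$-summand, keeping the $n$-energy controlled by the geometry of the neck. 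Once these geometric preparations are in place, the contradiction above is routine given Theorem~\ref{no_neck} and the $\eps$-regularity and removable-singularity results for $n$-harmonic maps already cited.
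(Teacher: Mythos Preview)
Your proposal has a fatal gap at its core. You assert that one can choose homotopically nontrivial initial data $u_0:S^n\to N$ with $E_n(u_0)<\eps_0$ arbitrarily small, and then use this to rule out bubbles. But the $n$-energy on $S^n$ is \emph{conformally invariant}, as you yourself note: precomposing with dilations of $S^n$ does not change $E_n$, and no ``long thin finger'' construction can lower the $n$-energy of an essential map below the $\varepsilon$-regularity threshold. Indeed, the removable-singularity and $\varepsilon$-regularity results you cite give precisely an \emph{energy gap}: any nonconstant $n$-harmonic map (and any homotopically nontrivial smooth map) from $S^n$ has $n$-energy bounded below by a fixed positive constant. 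So the step ``$E_n(u_0)<\eps_0$ forces no bubbles, hence $u_\infty$ is homotopic to $u_0$'' is simply unavailable. The remaining curvature-based arguments (putting a nonpositively curved metric on most of $N$, Bochner vanishing for $n$-harmonic spheres there, image-trapping for low-energy maps) are all contingent on this small-energy assumption and collapse with it.

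The paper's proof takes an entirely different route that works with initial energy bounded by a \emph{fixed} constant $C_1$ rather than small. It passes to a covering space $\tilde N$ of $N$ built by attaching a copy $X_l$ of $X\setminus V$ at each lattice point of $\mathbb{R}^m$ (the universal cover of $T^m$), and introduces the \emph{width} $\mathcal{W}(u;S^n)=\sup_{x,y}d_{\tilde N}(\tilde u(x),\tilde u(y))$ of the lift. The no-neck Theorem~\ref{no_neck} is used not to rule out bubbles but to show that for a global flow $\limsup_i\mathcal W(u(\cdot,t_i);S^n)\le C_3$ for some $C_3$ depending only on the energy bound $C_1$. The initial map $u_0$ is built from two copies of a fixed essential $h:S^n\to X$, placed into $X_0$ and $X_l$ for $l$ large, joined by a geodesic through a thin annulus; conformal invariance makes the two copies contribute $2E_n(h)$, and the connecting geodesic contributes $\le CL^n/(-\log\sigma)^{n-1}$, which can be made $<1$ for any $L$ by shrinking $\sigma$. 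Thus $E_n(u_0)\le C_1$ independently of $l$, while a topological argument (the lift of any map homotopic to $u_0$ must intersect both $X_0$ and $X_l$, else composing with a collapse $\tilde N\to X$ would null-homotope $h$) gives $\mathcal W(u';S^n)\ge d_{\tilde N}(X_0,X_l)\ge K$ for every $u'$ homotopic to $u_0$. Choosing $K>C_3$ yields the contradiction. The point is that the obstruction is measured by width in the cover, a homotopy invariant that can be made arbitrarily large at bounded energy --- precisely the feature your small-energy strategy cannot exploit.
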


Besides the finite time blow-up result on the harmonic map flow by Chen-Li \cite{CL}, another related  evolution problem   to the $n$-harmonic map flow is  the bi-harmonic map flow on $4$-dimensional manifolds.
 Liu-Yin  in \cite{LY2} established the no-neck result of  a sequence of biharmonic maps. Later, Liu-Yin \cite{LY} generalized the no-neck result  to  a sequence of approximate biharmonic maps. By combining the no-neck result with a construction of a proper target manifold, they  introduced a concept of  width of bi-harmonic maps in the covering space   to show that the bi-harmonic map flow blows up in finite time. These results provide  a skeleton for the proof of Theorem \ref{main_theorem}.

This paper is organized as follows.  In   Section 2, we show   asymptotical behavior of the solution of the $n$-harmonic flow as $t\to \infty$. In Section 3, we generalize Ding-Tian's estimate and apply it to prove the no-neck result for the $n$-harmonic flow. In   Section 4, we construct an example to prove Theorem \ref{main_theorem} and settle the Hungerb\"uhler  conjecture.

\section{some estimates and asymptotic behavior of the $n$-harmonic map flow}

In order to study asymptotic behavior of the $n$-harmonic map flow, we begin with some basic estimates.
We recall some results from \cite{Hung} on the $n$-harmonic map flow.
\begin{lem}  \label{energy} Let $u(t)$ be a regular solution to the $n$-harmonic map flow (\ref{n-flow}) in $M\times
 [0,T]$ with initial value $u(0)=u_0$.  Then for each $s$ with  $0<s\leq T$, we have
\begin{eqnarray} \label{Energy identity}
  \int_M\frac 1 n|\nabla u(s)|^{n} \,dv +
  \int_0^t \int_M  \left |\frac{\partial u}{\partial t}\right |^2 dv\,dt \leq  \int_M \frac 1 n| \nabla u_0|^{n}
  \,dv.\nonumber
\end{eqnarray}
\end{lem}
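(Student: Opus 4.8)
The plan is to recover the standard fact that \eqref{n-flow} is the $L^2$-gradient flow of the $n$-energy, by testing the equation against $\partial_t u$ and integrating over $M$. Formally, since the metric $g$ is independent of $t$ one has $\frac{d}{dt}\frac1n|\nabla u|^n=|\nabla u|^{n-2}g^{ij}\,\partial_i(\partial_t u^{\a})\,\partial_j u^{\a}$; integrating this over $M$ against $\sqrt{|g|}\,dx$ and integrating by parts in $x_i$ (there is no boundary term, since $\partial M=\emptyset$) gives
\[
\frac{d}{dt}E_n(u(t);M)=-\int_M\left\langle\frac{\partial u}{\partial t},\;\frac{1}{\sqrt{|g|}}\,\partial_i\!\left[|\nabla u|^{n-2}g^{ij}\sqrt{|g|}\,\partial_j u\right]\right\rangle dv .
\]
Using \eqref{n-flow} to replace the inner divergence term by $\partial_t u-|\nabla u|^{n-2}A(u)(\nabla u,\nabla u)$, this becomes
\[
\frac{d}{dt}E_n(u(t);M)=-\int_M\left|\frac{\partial u}{\partial t}\right|^2 dv+\int_M|\nabla u|^{n-2}\left\langle\frac{\partial u}{\partial t},\,A(u)(\nabla u,\nabla u)\right\rangle dv .
\]

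Next I would observe that the last integral vanishes pointwise: since $u(x,t)\in N$ for every $t$, the time derivative $\partial_t u$ lies in the tangent space $T_{u}N$, while $A(u)(\nabla u,\nabla u)$ is normal to $N$ at $u$, so $\left\langle\partial_t u,\,A(u)(\nabla u,\nabla u)\right\rangle\equiv 0$. This yields the differential identity
\[
\frac{d}{dt}E_n(u(t);M)=-\int_M\left|\frac{\partial u}{\partial t}\right|^2 dv\le 0 ,
\]
and integrating from $0$ to $s$ gives the asserted estimate, in fact with equality.

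The delicate point — and essentially the only real work — is rigor: a regular solution is a priori merely continuous, with $|\nabla u|^{\frac{n-2}{2}}\nabla u\in L^2_tH^1_x$ and $|\nabla u|\in L^{2n}$, so differentiating under the integral sign and integrating by parts as above are not immediately licit, and the weight $|\nabla u|^{n-2}$ degenerates where $\nabla u=0$. I would handle this in the usual way: replace $|\nabla u|^{n-2}$ by the nondegenerate weight $(|\nabla u|^2+\eps)^{\frac{n-2}{2}}$, run the computation for the smooth solutions of a regularized/penalized version of \eqref{n-flow} (as used to construct the weak solution), where every step above is legitimate, and then pass to the limit, retaining the inequality by the strong convergence of the $n$-energies together with the weak lower semicontinuity of the $L^2$-norm of the time derivative (whose finiteness is thereby an output, not an assumption); the integrability bounds in the definition of a regular solution make this limiting argument routine. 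Since this is precisely the energy inequality already established by Hungerb\"uhler \cite{Hung} in the construction of the global weak solution, in the write-up I would simply cite \cite{Hung} for these approximation details. The one genuinely essential ingredient, reused throughout the rest of the paper, is the pointwise tangential/normal orthogonality exploited above — the same mechanism that makes the classical harmonic map flow energy-decreasing.
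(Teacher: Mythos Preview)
Your proposal is correct and matches the paper's treatment: the paper does not give a proof at all but simply recalls this lemma as a known result from Hungerb\"uhler \cite{Hung}. Your derivation---testing \eqref{n-flow} against $\partial_t u$, using the tangential/normal orthogonality to kill the second fundamental form term, and deferring the approximation details to \cite{Hung}---is precisely the standard argument underlying that citation.
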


\begin{lem}\label{2n estimate} Let $u$ be a regular solution to the $n$-harmonic map flow \eqref{n-flow}.   Let $\eta$ be a cut-off function in $B_{r}$ such that $\eta =1$ in $B_{\frac r 2}$, $|\nabla \eta|\leq \frac{C}{r}$ and $|\eta|\leq 1$ in $B_{r}$.
 Then we have
  \begin{eqnarray}
        && \int_{B_r} |\nabla u|^{2n}\eta^{n}\,dv\\
        &&\leq C \left (\int_{B_r}|\nabla u|^{n}\,dv\right )^{\frac 2 n} \int_{B_r}\left(|\nabla^2  u|^2\,|\nabla u|^{2n-4}\,\eta^{n} +  |\nabla u|^n |\nabla\eta|^n \right)\,dv.\nonumber
\end{eqnarray}
and
   \begin{eqnarray}
	  &&\int_{B_r}|\nabla^2  u|^{2}|\nabla u|^{2n-4}\eta^{n}\,dv
        \\&&\leq C\int_{B_r} |\nabla u|^{2n}  \eta^{n}+ |\nabla u|^{n}(\eta^n+|\nabla \eta|^n)   \,dv.\nonumber
 \end{eqnarray}
\end{lem}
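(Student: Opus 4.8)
The plan is to establish the two inequalities by standard integration-by-parts combined with the Sobolev and interpolation machinery adapted to the degenerate weight $|\nabla u|^{n-2}$. For the first inequality, I would set $v = |\nabla u|^{\frac{n-2}{2}}\nabla u$, so that $|v|^2 = |\nabla u|^n$, and the regular-solution hypothesis gives control of $\int |\nabla v|^2$. The key point is the Sobolev inequality in dimension $n$ applied to the function $|v|\eta^{n/2}$ (or an equivalent power): since we are in the critical dimension, $\||v|\eta^{n/2}\|_{L^{2n/(n-2)}}$ is \emph{not} directly available, so instead one uses the borderline Sobolev embedding $W^{1,n}\hookrightarrow$ (all $L^p$), or more cleanly applies the Gagliardo--Nirenberg inequality to bound $\int |\nabla u|^{2n}\eta^n = \int |v|^4 \eta^n$ by interpolating between $\int|v|^2\eta^n$ and $\int|\nabla(v\eta^{n/2})|^2$. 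Concretely, writing $w = |v| \eta^{n/2}$, one has
\[
\int_{B_r} |w|^4 \, dv \leq C\left(\int_{B_r} |w|^2\, dv\right)^{\frac 2 n}\left(\int_{B_r} |\nabla w|^2\, dv\right),
\]
which is exactly the Gagliardo--Nirenberg inequality $\|w\|_{L^4}^4 \le C \|w\|_{L^2}^{2/n}\|\nabla w\|_{L^2}^{(4n-8)/?}$—wait, I should be careful: the correct scaling-invariant version on $\R^n$ reads $\|w\|_{L^4}^2 \le C\|w\|_{L^2}^{1-\theta}\|\nabla w\|_{L^2}^{\theta}$ with $\theta = n/4$ only when $n \le 4$; for general $n$ the clean statement to invoke is that $\int |w|^{2+\frac 4 n} \le C (\int |w|^2)^{2/n}\int|\nabla w|^2$. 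I would therefore instead run the argument so that the target exponent matches: raise $\eta$ to the appropriate power and absorb the leftover $|\nabla u|$-power, obtaining the stated form with the extra $|\nabla u|^n|\nabla\eta|^n$ term coming from $\nabla(w)$ hitting the cutoff. Expanding $|\nabla w|^2 \lesssim |\nabla^2 u|^2 |\nabla u|^{2n-4}\eta^n + |\nabla u|^{2n}\eta^{n-2}|\nabla\eta|^2$ and then using Young's inequality on the second piece (it has a factor $\eta^{n-2}$, which can be combined with $|\nabla u|^n|\nabla\eta|^n$ after another application of the same Sobolev step, or absorbed) yields the first claimed inequality.

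For the second inequality, I would multiply the flow equation \eqref{n-flow} (equivalently the PDE in divergence form) by $\Delta u \,|\nabla u|^{n-2}\eta^n$ — or rather test the equation against a suitable second-order quantity — and integrate by parts to move one derivative, producing the Bochner-type term $\int |\nabla^2 u|^2 |\nabla u|^{2n-4}\eta^n$ on the left. The right-hand side then collects: (a) the term $\int |\nabla u|^{2n}\eta^n$ from the quadratic $A(u)(\nabla u,\nabla u)$ contribution (using $|A|\le C$ since $N$ is compact) and from the lower-order commutator terms where derivatives of $|\nabla u|^{n-2}$ appear; (b) cutoff terms $\int |\nabla u|^n(\eta^n + |\nabla\eta|^n)$ arising whenever a derivative lands on $\eta$; and (c) the time-derivative term $\int \partial_t u \cdot (\cdots)$, which for a regular solution is controlled by $\int(|\partial_t u|^2 + |\nabla u|^{2n})$ via Cauchy--Schwarz and hence folds into (a) after using Lemma \ref{energy}. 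The degeneracy of the $n$-Laplacian means one should first do this computation for the regularized equation (replacing $|\nabla u|$ by $(\epsilon^2 + |\nabla u|^2)^{1/2}$) and pass to the limit, or equivalently work with the $C^{1,\alpha}$ regularity plus the $W^{2,2}$-type bound guaranteed by the regular-solution definition.

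The main obstacle I anticipate is the bookkeeping of the degenerate weights in the integration by parts for the second inequality: differentiating $|\nabla u|^{n-2}$ produces a factor $(n-2)|\nabla u|^{n-4}\nabla|\nabla u|\cdot\nabla^2 u$, which is of the same order as the good term $|\nabla^2 u|^2|\nabla u|^{2n-4}$ and must be reabsorbed with a Cauchy--Schwarz inequality with small constant — this requires the coefficient to have the right sign, which is where the precise structure of the $n$-harmonic operator (its uniform ellipticity \emph{transverse} to the degeneracy direction) is used. A secondary technical point is justifying that the boundary terms vanish and that all integrations by parts are legitimate given only $C^{1,\alpha}\cap W^{2,2}_{\text{loc}}$ regularity; this is handled by the regularization argument mentioned above. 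Once these two inequalities are in hand, they will be used in tandem (the first to convert $\int|\nabla^2 u|^2|\nabla u|^{2n-4}$ into $\int|\nabla u|^{2n}$ and vice versa) to close a small-energy estimate, which is the standard mechanism behind $\epsilon$-regularity for the $n$-harmonic flow.
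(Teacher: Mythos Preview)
Your route to the first inequality has a genuine gap. With $v=|\nabla u|^{(n-2)/2}\nabla u$ and $w=|v|\eta^{n/2}$ you have $|v|^{2}=|\nabla u|^{n}$ but $|\nabla v|^{2}\approx |\nabla^{2} u|^{2}|\nabla u|^{\,n-2}$, \emph{not} $|\nabla^{2} u|^{2}|\nabla u|^{\,2n-4}$, so even a correct interpolation inequality on $w$ cannot produce the Hessian weight appearing in the statement. You also noticed that the form $\int w^{4}\le C(\int w^{2})^{2/n}\int|\nabla w|^{2}$ is dimensionally wrong for $n>2$, and the scale-correct Nash version $\int w^{2+4/n}\le C(\int w^{2})^{2/n}\int|\nabla w|^{2}$ lands on $|\nabla u|^{n+2}$ on the left, not $|\nabla u|^{2n}$; there is no choice of $w$ that simultaneously gives the exponent $2n$ on the left and the factor $(\int|\nabla u|^{n})^{2/n}$ on the right. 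The paper avoids interpolation entirely: it writes $|\nabla u|^{2n}\eta^{n}=|\nabla u|\cdot(|\nabla u|^{2n-1}\eta^{n})$, applies H\"older with exponents $(n,\tfrac{n}{n-1})$ to peel off $(\int|\nabla u|^{n})^{1/n}$, and then uses the endpoint Sobolev inequality $\|f\|_{L^{n/(n-1)}}\le C\|\nabla f\|_{L^{1}}$ on $f=|\nabla u|^{2n-1}\eta^{n}$. Differentiating $f$ produces $|\nabla^{2}u|\,|\nabla u|^{2n-2}\eta^{n}$ plus a cutoff term; one Young inequality then converts this to $|\nabla^{2}u|^{2}|\nabla u|^{2n-4}$ while simultaneously promoting the prefactor to $(\int|\nabla u|^{n})^{2/n}$, and the cutoff piece becomes $|\nabla u|^{n}|\nabla\eta|^{n}$ after absorbing a copy of $\int|\nabla u|^{2n}\eta^{n}$ into the left-hand side.

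For the second inequality your plan (test the flow equation against $\Delta u\,|\nabla u|^{n-2}\eta^{n}$) is plausible but is not what the paper does. The paper proves a purely elliptic Bochner-type identity: it integrates $\int\langle\nabla_{k}(|\nabla u|^{n-2}\nabla_{k}u),\nabla_{l}(|\nabla u|^{n-2}\nabla_{l}u)\rangle\eta^{n}$ by parts twice, commutes $\nabla_{k}$ and $\nabla_{l}$ via the Ricci identity, and observes that the resulting mixed-index integrand $\langle\nabla_{l}(|\nabla u|^{n-2}\nabla_{k}u),\nabla_{k}(|\nabla u|^{n-2}\nabla_{l}u)\rangle$ expands into $|\nabla u|^{2n-4}|\nabla^{2}u|^{2}$ plus terms that are manifestly nonnegative. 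This disposes of the sign worry you raised by structure rather than by absorption with a small parameter. The flow equation is invoked only at the very last step, to bound $|\nabla\cdot(|\nabla u|^{n-2}\nabla u)|$ pointwise by $|\partial_{t}u|+C|\nabla u|^{n}$, which turns the squared divergence into the $|\nabla u|^{2n}$ term on the right.
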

\begin{proof}
By using the H\"older and Sobolev inequalities, we have
  \begin{eqnarray}\label{2n}
        &&\int_{B_r} |\nabla u|^{2n}\eta^{n}\,dv=\int_{B_r} |\nabla u| \left(|\nabla u|^{2n-1}\eta^{n}\right)\,dv\\
        &&\leq \left (\int_{B_r}|\nabla u|^{n}\,dv\right )^{\frac 1 n}\left (\int_{B_r} \left(|\nabla u|^{2n-1}\eta^{n}\right) ^\frac{n}{n-1}\,dv\right )^{\frac{n-1}n}\nonumber\\
        &&\leq C \left (\int_{B_r}|\nabla u|^{n}\,dv\right )^{\frac 1 n}  \int_{B_r}|\nabla ( |\nabla u|^{2n-1}\eta^{n})|\,dv \nonumber\\
        &&\leq C \left (\int_{B_r}|\nabla u|^{n}\,dv\right )^{\frac 1 n}  \int_{B_r}\left(|\nabla^2  u|\,|\nabla u|^{2n-2}\,\eta^{n} + |\nabla u|^{2n-1}|\nabla \eta|\,\eta^{n-1}  \right)\,dv.\nonumber
  \end{eqnarray}
By Young's inequality, we have
\begin{eqnarray}\label{2n-2}
  && \quad\left (\int_{B_r}|\nabla u|^{n}\,dv\right )^{\frac 1 n} \int_{B_r} |\nabla^2  u|\,|\nabla u|^{2n-2}\,\eta^{n}  \,dv \\
  &&\leq \left (\int_{B_r}|\nabla u|^{n}\,dv\right )^{\frac 2 n} \int_{B_r}  |\nabla^2 u|^2\, |\nabla u|^{2n-4} \,\eta^{n} dv + \frac 12 \int_{B_r} |\nabla u|^{2n}\,\eta^{n}dv.\nonumber
\end{eqnarray}
Similarly, we have
\begin{eqnarray}\label{2n-1}
  &&\int_{B_r}|\nabla u|^{2n-1}|\nabla \eta|\, \eta^{n-1}\,dv = \int_{B_r} |\nabla u |^n |\nabla u|^{n-1} |\nabla  \eta|\,\eta^{n-1}\,dv\\
  &&\leq C \int_{B_r} |\nabla u|^n |\nabla\eta|^n\eta^{n-1}\,dv+  C\int _{B_r} |\nabla u|^{2n}\eta ^{n-1}\,dv.\nonumber
\end{eqnarray}
Combining \eqref{2n}, \eqref{2n-2} with \eqref{2n-1}, we have
\begin{eqnarray}\label{2n_2}
&&\int_{B_r} |\nabla u|^{2n}\eta^{n}\,dv \nonumber\\ &&\leq C\left (\int_{B_r}|\nabla u|^{n}\,dv\right )^{\frac 2 n} \int_{B_r}\left(|\nabla^2  u|^2\,|\nabla u|^{2n-4}\,\eta^{n} +  |\nabla u|^n |\nabla\eta|^n \right)\,dv.
\end{eqnarray}

Using the Ricci identity, we have
\[\nabla_k \nabla_l \left (|\nabla u|^{n-2}
\nabla u\right )=\nabla_l\nabla_k \left ( |\nabla u|^{n-2} \nabla
u\right ) +R_M\#\left ( |\nabla u|^{n-2} \nabla u\right )\]
 with the Riemannian curvature $R_M$. Integrations by parts twice yield that
\begin{eqnarray*}
        &&\int_{B_r}\left <\nabla_k (|\nabla u|^{n-2}\nabla_k u), \nabla_l(|\nabla u|^{n-2}\nabla_l u) \right >\eta^n\,dv\\
         &&=- \int_{B_r}\left <\nabla_l\nabla_k (|\nabla u|^{n-2}\nabla_k u),  |\nabla u|^{n-2}\nabla_l u \right >\eta^n\,dv
         \\&& \quad - \int_{B_r}\left <  \nabla_k(|\nabla u|^{n-2}\nabla_k u), |\nabla u|^{n-2}\nabla_l u  \right >\nabla_l \eta^n\,dv\\
         &&=\int_{B_r}\left <\nabla_l (|\nabla u|^{n-2}\nabla_k u), \nabla_k(|\nabla u|^{n-2}\nabla_l u) \right >\eta^n\,dv\\
         &&\quad + \int_{B_r}\left <R_M\# (|\nabla u|^{n-2}\nabla_k u),  |\nabla u|^{n-2}\nabla_l u  \right >\eta^n\,dv\\
         && \quad + \int_{B_r}\left <  \nabla_l(|\nabla u|^{n-2}\nabla_k u), |\nabla u|^{n-2}\nabla_l u  \right >\nabla_k \eta^n\,dv\\
      && \quad - \int_{B_r}\left <  \nabla_k(|\nabla u|^{n-2}\nabla_k u), |\nabla u|^{n-2}\nabla_l u  \right >\nabla_l \eta^n\,dv.
  \end{eqnarray*}
Note that
 \begin{eqnarray}\label{Est-2}
        && \int_{B_r}\left <\nabla_l (|\nabla u|^{n-2}\nabla_k u), \nabla_k(|\nabla u|^{n-2}\nabla_l u)\right >\eta^n\,dv\\
         &&= \int_{B_r} \sum|\nabla u|^{2n-4}|\nabla_{lk} u|^2\, \eta^n+\left <\nabla_l (|\nabla u|^{n-2})\nabla_k u, \nabla_k(|\nabla u|^{n-2})\nabla_l u\right > \eta^n\,dv\nonumber\\
         &&\,\,\,\,+2\int_{B_r}\left <\nabla_l (|\nabla u|^{n-2})\nabla_k u, |\nabla u|^{n-2}\nabla_{kl} u \right >\eta^n\,dv\nonumber\\
         &&\geq \int_{B_r} (|\nabla u|^{2n-4}|\nabla^2 u|^2 + |\nabla_k  (|\nabla u|^{n-2})\nabla_k u|^2)\eta^n\,dv\nonumber\\
         &&\,\,\,\,+2(n-2)\int_{B_r}|\nabla u|^{n-4}|\nabla |\nabla u||^2\eta^n\,dv.\nonumber
  \end{eqnarray}
Combining \eqref{2n_2} with \eqref{Est-2}, this implies
   \begin{eqnarray}\label{2,2n-4}
        &&\int_{B_r}|\nabla^2  u|^{2}|\nabla u|^{2n-4}\eta^{n}\,dv\\
         &&\leq  \int_{B_r}  |\nabla \cdot (|\nabla u|^{n-2}\nabla u)|^2 \eta^n\,dv
         +C \int_{B_r}|\nabla u|^{2n-2}\eta^{n-2}(\eta^2+|\nabla\eta |^2)\,dv\nonumber
        \\&&\leq C\int_{B_r} |\nabla u|^{2n}  \eta^{n}+ |\nabla u|^{n}(\eta^n+|\nabla \eta|^n)   \,dv.\nonumber
  \end{eqnarray}
We finish the proof by combining \eqref{2n_2} with \eqref{2,2n-4}.

\end{proof}

\begin{lem}
There exists a sufficiently small constant $\varepsilon_{1}>0$ such that if
$u$  is a regular solution of
(\ref{n-flow}) on $B_{2R_0}\left(  x_{0}\right) \times\left[t_0-2R_0^n, t_0\right]$ satisfying
\[
\sup_{t_0-2R_0^n\leq t\leq t_0}\int_{B_{2R_0} (x_0)}|\nabla   u(x,t)|^n\,dv <\varepsilon_{1},
\]
 we have
\begin{eqnarray*} \label{2.3}
        &&
\int_{t_0-2R_0^n}^{t_0}\int_{B_{R_0}\left(  x_0\right)  }\left\vert
\nabla^{2}u\right\vert ^{2}\left\vert \nabla u\right\vert
^{2n-4}+ \left\vert \nabla
u\right\vert ^{2n} dvdt\\
&&\nonumber <C   \sup_{t_0- 2R_0^n\leq t\leq t_0}\int_{B_{2R_0} (x_0)}|\nabla   u(x,t)|^n\,dv
\end{eqnarray*}
for   some constant $C>0$.
\end{lem}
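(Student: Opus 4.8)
The plan is to run the standard small‑energy bootstrap for a degenerate parabolic system. Fix a cut‑off $\eta\in C_c^\infty(B_{2R_0}(x_0))$ with $\eta\equiv1$ on $B_{R_0}(x_0)$, $0\le\eta\le1$, $|\nabla\eta|\le C/R_0$, put $Q=B_{2R_0}(x_0)\times[t_0-2R_0^n,t_0]$, and write $\varepsilon_0:=\sup_{t_0-2R_0^n\le t\le t_0}\int_{B_{2R_0}(x_0)}|\nabla u|^n\,dv<\varepsilon_1$, together with
\[
\mathcal E=\int_Q|\nabla u|^{2n}\eta^n\,dv\,dt,\qquad \mathcal H=\int_Q|\nabla^2u|^2|\nabla u|^{2n-4}\eta^n\,dv\,dt,\qquad \mathcal T=\int_Q|\partial_tu|^2\eta^n\,dv\,dt .
\]
Since $\eta\equiv1$ on $B_{R_0}$ it suffices to prove $\mathcal E+\mathcal H\le C\varepsilon_0$. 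Every inequality below is first derived for a.e.\ fixed $t$ and then integrated in $t$ over $[t_0-2R_0^n,t_0]$; as usual these computations (and those of Lemma \ref{2n estimate}) are carried out on the smooth approximating flows, on which the quantities are finite, and passed to the limit by lower semicontinuity, so one may argue as if $u$ were smooth. The recurring device: before applying Young's inequality to an error term one first extracts, by H\"older, a factor $\big(\int_{B_{2R_0}}|\nabla u|^n\,dv\big)^{2/n}\le\varepsilon_0^{2/n}$, so that each lower‑order term ends up of size $O(\varepsilon_0)$ instead of merely bounded.

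The first ingredient is a local $n$‑energy inequality. Testing \eqref{n-flow} with $\eta^n\partial_tu$ and using that $A(u)(\nabla u,\nabla u)$ is normal to $N$ while $\partial_tu$ is tangent, an integration by parts in the spatial variables gives
\[
\frac1n\frac{d}{dt}\int_{B_{2R_0}}|\nabla u|^n\eta^n\,dv+\int_{B_{2R_0}}|\partial_tu|^2\eta^n\,dv\le C\int_{B_{2R_0}}|\nabla u|^{n-1}|\partial_tu|\,\eta^{n-1}|\nabla\eta|\,dv .
\]
Absorbing half of $\int|\partial_tu|^2\eta^n$ on the right by Young, integrating in $t$ from $t_0-2R_0^n$, discarding the nonnegative term at $t=t_0$, bounding the slice at $t=t_0-2R_0^n$ by $\varepsilon_0/n$, and estimating the remaining term $\int_Q|\nabla u|^{2n-2}\eta^{n-2}|\nabla\eta|^2$ by writing it as $\big(|\nabla u|^n|\nabla\eta|^n\big)^{2/n}\big(|\nabla u|^{2n}\eta^n\big)^{(n-2)/n}$ and applying H\"older and then Young, one obtains, for every $\delta>0$, $\ \mathcal T\le C\varepsilon_0+\delta\,\mathcal E$.

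The second ingredient is Lemma \ref{2n estimate}, applied at a.e.\ $t$ with $r=2R_0$: its first estimate gives $\int_{B_{2R_0}}|\nabla u|^{2n}\eta^n\,dv\le C\varepsilon_0^{2/n}\big(\int_{B_{2R_0}}|\nabla^2u|^2|\nabla u|^{2n-4}\eta^n\,dv+\int_{B_{2R_0}}|\nabla u|^n|\nabla\eta|^n\,dv\big)$, and the proof of its second estimate, run for the flow via $\nabla\cdot(|\nabla u|^{n-2}\nabla u)=\partial_tu-|\nabla u|^{n-2}A(u)(\nabla u,\nabla u)$, produces the same right‑hand side with the additional term $C\int_{B_{2R_0}}|\partial_tu|^2\eta^n\,dv$. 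Substituting the first estimate into the second and choosing $\varepsilon_1$ so small that the coefficient $C^2\varepsilon_0^{2/n}$ of $\int_{B_{2R_0}}|\nabla^2u|^2|\nabla u|^{2n-4}\eta^n\,dv$ is $\le\tfrac12$, one absorbs that term, and then both $\int_{B_{2R_0}}|\nabla u|^{2n}\eta^n\,dv$ and $\int_{B_{2R_0}}|\nabla^2u|^2|\nabla u|^{2n-4}\eta^n\,dv$ are bounded by $C\int_{B_{2R_0}}|\partial_tu|^2\eta^n\,dv+C\int_{B_{2R_0}}|\nabla u|^n(\eta^n+|\nabla\eta|^n)\,dv$. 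Integrating in $t$, inserting $\mathcal T\le C\varepsilon_0+\delta\mathcal E$, and using $\int_Q|\nabla u|^n|\nabla\eta|^n\le C\varepsilon_0$ and $\int_Q|\nabla u|^n\eta^n\le C\varepsilon_0$ (here one may assume $R_0\le1$, absorbing the factor $R_0^n$ into $C$), one reaches $\mathcal E+\mathcal H\le C\delta\,\mathcal E+C\varepsilon_0$. Choosing $\delta$ with $C\delta\le\tfrac12$ and absorbing $\mathcal E$ into the left‑hand side — legitimate precisely because $u$ is a \emph{regular} solution, so $\mathcal E<\infty$ — yields $\mathcal E+\mathcal H\le C\varepsilon_0$, which is the assertion.

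The hard part, as in every such $\varepsilon$‑regularity argument, is the circular dependence among $\mathcal T$, $\mathcal E$ and $\mathcal H$: $\mathcal T$ is controlled through $\mathcal E$, $\mathcal E$ through $\mathcal H$ and $\mathcal T$, and $\mathcal H$ through $\mathcal E$ and $\mathcal T$, so none can be estimated on its own; the loop closes only because one coupling constant carries the small factor $\varepsilon_0^{2/n}$ supplied, through the Gagliardo--Nirenberg‑type inequality of Lemma \ref{2n estimate}, by the smallness hypothesis. The two points needing genuine care are (i) keeping every error term of size $O(\varepsilon_0)$, which forces the H\"older extraction of $\int|\nabla u|^n$ before each use of Young's inequality; and (ii) the a priori finiteness of $\mathcal E$ and $\mathcal H$ used in the final absorption, which is exactly what the regular‑solution hypothesis guarantees, via the $L^2$ bounds in its definition and the approximation it supports. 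Curvature terms of $(M,g)$ arising in the integrations by parts are of the same order as error terms already present and are absorbed into $C=C(n,M)$.
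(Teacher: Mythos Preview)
Your proof is correct and follows essentially the same route as the paper: test the flow equation, invoke Lemma~\ref{2n estimate}, and use the small factor $\varepsilon_0^{2/n}$ to close the bootstrap loop among $\mathcal E$, $\mathcal H$, and $\mathcal T$. The one organisational difference is that the paper's sketch multiplies \eqref{n-flow} by $\phi^n\nabla\!\cdot(|\nabla u|^{n-2}\nabla u)$ and stops at the pointwise-in-time bound \eqref{2n,2,2n-4} with $\int|\partial_t u|^2$ still on the right, whereas you additionally prove the local energy inequality $\mathcal T\le C\varepsilon_0+\delta\mathcal E$ (the computation behind the paper's later Lemma~\ref{small}) to close the estimate purely in terms of $\varepsilon_0$; this makes your argument more self-contained than the paper's sketch.
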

\begin{proof} Lemma 2.3 was  proved by Hungerb\"uhler by using  an extension of the Ladyzhenskaya-Solonnikov-Nikolaevna inequality (see Lemma 5 of \cite{H}). Herewith, we would like to give a slightly different approach by using Lemma  \ref{2n estimate}.

Multiplying (\ref {n-flow}) by $\phi^n\nabla \cdot (|\nabla u|^{n-2}\nabla u)$ and using Lemma  \ref{2n estimate} by choosing a sufficiently small $\varepsilon_1$ in above inequalities yields that
 \begin{eqnarray*}
        && \int_{B_{2R_0}(x_0)}  |\nabla \cdot (\left|\nabla u\right|^{n-2}\nabla u)|^2 \phi^n\,dv\\
        &&\leq \frac 12 \int_{B_{2R_0}(x_0)}  |\nabla \cdot (|\nabla u|^{n-2}\nabla u)|^2 \phi^n\,dv+C\int_{B_{2R_0}(x_0)} \left(\left|\frac {\partial u}{\partial t}\right|^2 +|\nabla u|^{2n}\right) \phi^n\,dv
        \\
        &&\leq \frac 3 4\int_{B_{2R_0}(x_0)}  |\nabla \cdot (|\nabla u|^{n-2}\nabla u)|^2 \phi^n\,dv + C\left(1+\frac 1 {R_0^n}\right)\int_{B_{2R_0}(x_0)}  |\nabla   u|^{n}\,dv\\
        &&+ C\int_{B_{2R_0}(x_0)} \left|\frac {\partial u}{\partial t}\right|^2 \,dv.
  \end{eqnarray*}

Together with Lemma \ref{2n estimate}, we obtain
\begin{eqnarray}\label{2n,2,2n-4}
  && \int_{B_{R_0}(x_0)} |\nabla u |^{2n}+|\nabla^2 u |^2 |\nabla u |^{2n-4}\,dv  \\
  &&\leq C \int_{B_{2R_0}(x_0)}  (R_0^{-n}+1) |\nabla   u |^{n}+ \left|\frac {\partial u}{\partial t}\right|^2 \,dv\nonumber\\
  &&\leq C(R_0^{-n}+1) E_n(u_0)+C \int_{B_{2R_0}(x_0)} \left|\frac {\partial u}{\partial t}\right|^2 \,dv.\nonumber
    \end{eqnarray}

 \end{proof}

\begin{lem} \label{lem:eregularity} Let $u$ be a regular solution to (\ref{n-flow}). Then  there exists a positive constant
    $\varepsilon_1$ such that  if for some $R_0>0$   the inequality
\[
\sup_{t_0-2R_0^n\leq t\leq t_0}\int_{B_{2R_0} (x_0)}|\nabla   u(x,t)|^n\,dv <\varepsilon_{1}
\]
 holds,
we have
\[\sup_{[t_0-  R_0^n,  t_0] \times B_{R_0}(x_0)} |\nabla u|^{n}\,dv \leq C  R_0^{-n},\]
where  $C$ depending on $M$ is a constant independent of $R_0$.
\end{lem}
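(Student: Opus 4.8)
The plan is to establish the pointwise bound by a Moser iteration, built on the Caccioppoli-type estimates of Lemma~\ref{2n estimate} and the preceding lemma together with the smallness of the local $n$-energy, following Hungerb\"uhler's approach for the flow and the iteration scheme for $n$-harmonic maps. First I would reduce to the unit scale by the parabolic dilation $v(x,t):=u\bigl(x_0+R_0x,\,t_0+R_0^{\,n}t\bigr)$: equation \eqref{n-flow}, the density $|\nabla u|^{n}\,dv$ and the integrand $|\nabla^{2}u|^{2}|\nabla u|^{2n-4}+|\nabla u|^{2n}$ are all invariant under it, while the rescaled metric of $M$ and its curvature stay controlled by the geometry of $M$; so it suffices to treat $R_0=1$, $x_0=0$, $t_0=0$, i.e. to show that $\sup_{-2\le t\le0}\int_{B_{2}}|\nabla u|^{n}\,dv<\varepsilon_1$ forces $\sup_{B_{r_0}\times[-r_0^{\,n},0]}|\nabla u|^{n}\le C$ for a fixed small $r_0=r_0(M,n)$. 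The full statement, on $B_{R_0}(x_0)\times[t_0-R_0^{\,n},t_0]$, then follows by a covering argument: taking $r_0<\tfrac12$, the hypothesis on $B_{2R_0}(x_0)$ gives the same $n$-energy smallness on $B_{2r_0R_0}(y)$ for every $y\in B_{R_0}(x_0)$, so the scaled estimate applies, with a fixed constant, centred at each such $y$ and at each time level.

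The analytic core is a Caccioppoli inequality at every power. One differentiates \eqref{n-flow} in $x_k$, takes the inner product with $|\nabla u|^{n-2}(\partial_k u)\bigl(|\nabla u|^{2}\bigr)^{\beta}\eta^{N}$ for $\beta\ge0$ and a cut-off $\eta$ equal to $1$ on $B_{\rho'}$ and supported in $B_{\rho}$, sums over $k$ and integrates by parts. The Ricci identity converts the commuted second derivatives into curvature terms of $M$, exactly as in the proof of Lemma~\ref{2n estimate}; the reaction term is tamed because $A(u)(\nabla u,\nabla u)$ is normal to $N$, so its pairing with the tangential vector $\partial_k u$ kills the terms of top order in $\nabla u$ and leaves only a contribution bounded by $|\nabla u|^{\,n+2}$; and one works with $|\nabla u|^{2}+\delta$ in place of $|\nabla u|^{2}$, letting $\delta\downarrow0$, to make the differentiation rigorous, since a priori only $\nabla u\in C^{0,\alpha}_{loc}$ and $\nabla^{2}u\in L^{2}_{loc}$. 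After Young's inequality to reabsorb the leading Hessian terms and Gronwall in $t$ for the (zeroth-order) curvature term, one obtains, writing $Q:=B_{\rho}\times[-\rho^{\,n},0]$ and $\Psi:=|\nabla u|^{\frac n2+\beta}$, an inequality of the schematic form
\[
\sup_{t}\int_{B}\Psi^{2}\eta^{N}\,dv+\int_{Q}\Bigl|\nabla\bigl(|\nabla u|^{\frac{n-2}{2}+\beta}\nabla u\bigr)\Bigr|^{2}\eta^{N}\,dv\,dt
\ \le\ \frac{C(1+\beta)^{p}}{(\rho-\rho')^{2}}\int_{Q}\Psi^{2}\,dv\,dt
\ +\ C\int_{Q}|\nabla u|^{2}\,\Psi^{2}\,\eta^{N}\,dv\,dt ,
\]
where $C,p$ depend only on $M,N,n$ and the second term on the left dominates $\int_{Q}|\nabla\Psi|^{2}\eta^{N}$ since $|\nabla\Psi|\le\bigl|\nabla\bigl(|\nabla u|^{\frac{n-2}{2}+\beta}\nabla u\bigr)\bigr|$.

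The last, dangerous term is absorbed using the energy smallness: since $n\ge3$, H\"older's inequality and the Sobolev embedding $W_{0}^{1,2}(B)\hookrightarrow L^{2n/(n-2)}(B)$ give, on each time slice,
\[
\int_{B}|\nabla u|^{2}\bigl(\Psi\eta^{N/2}\bigr)^{2}\,dv
\ \le\ \Bigl(\int_{B}|\nabla u|^{n}\,dv\Bigr)^{2/n}\int_{B}\bigl|\nabla\bigl(\Psi\eta^{N/2}\bigr)\bigr|^{2}\,dv
\ \le\ C\varepsilon_1^{2/n}\int_{B}\bigl(|\nabla\Psi|^{2}\eta^{N}+\Psi^{2}|\nabla\eta|^{2}\bigr)\,dv ,
\]
so that for $\varepsilon_1$ small (a threshold independent of $\beta$) this term is swallowed by the left-hand side of the Caccioppoli inequality. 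Feeding the resulting estimate into the parabolic Gagliardo--Nirenberg inequality $\bigl\|\Psi\eta^{N/2}\bigr\|_{L^{2(n+2)/n}(Q)}^{2}\le C\bigl(\sup_{t}\bigl\|\Psi\eta^{N/2}\bigr\|_{L^{2}(B)}^{2}+\bigl\|\nabla(\Psi\eta^{N/2})\bigr\|_{L^{2}(Q)}^{2}\bigr)$ produces, with $a_\beta:=n+2\beta$,
\[
\bigl\|\,|\nabla u|^{a_\beta}\bigr\|_{L^{(n+2)/n}\bigl(B_{\rho'}\times[-{\rho'}^{n},0]\bigr)}
\ \le\ \frac{C(1+\beta)^{p}}{(\rho-\rho')^{2}}\,\bigl\|\,|\nabla u|^{a_\beta}\bigr\|_{L^{1}(Q)} .
\]
Iterating this over the exponents $a_{\beta_k}=n\bigl(\tfrac{n+2}{n}\bigr)^{k}$ and radii $\rho_{k}\downarrow r_0$ with $\rho_k-\rho_{k+1}\sim 2^{-k}$, the product of the $a_{\beta_k}^{-1}$-th powers of the constants converges because $\tfrac{n+2}{n}>1$, and the base level $\bigl\|\,|\nabla u|\bigr\|_{L^{n}(Q_{1})}^{n}=\bigl\|\,|\nabla u|^{n}\bigr\|_{L^{1}(Q_{1})}\le\varepsilon_1$ comes from the hypothesis. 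Hence $\bigl\|\,|\nabla u|\bigr\|_{L^{\infty}(B_{r_0}\times[-r_0^{\,n},0])}\le C\varepsilon_1^{1/n}$, which after undoing the scaling and covering gives $\sup_{B_{R_0}(x_0)\times[t_0-R_0^{\,n},t_0]}|\nabla u|^{n}\le CR_0^{-n}$.

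The hard part is the second step: producing the Caccioppoli/Bochner inequality for the \emph{degenerate} operator $u\mapsto\nabla\!\cdot\!\bigl(|\nabla u|^{n-2}\nabla u\bigr)$ with a usable coercive Hessian term and, crucially, a critical right-hand term of the \emph{exact} shape $|\nabla u|^{2}\Psi^{2}$. This is what forces both the passage from $|\nabla u|^{2}$ to $|\nabla u|^{2}+\delta$ and the systematic use of the normality of the second fundamental form of $N$: without the latter the reaction term would generate a worse nonlinearity (e.g. $|\nabla u|^{2n}\Psi^{2}$), which the mere $L^{1}$-smallness of $|\nabla u|^{2n}$ provided by the preceding lemma would not absorb. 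Once that inequality is in hand, the iteration in the last step is routine. As an alternative, one could argue by contradiction and blow-up: were the estimate false, rescaling at near-maximal points of $|\nabla u|$ and using the compactness furnished by Lemma~\ref{2n estimate}, the energy monotonicity of the flow, the Duzaar--Fuchs removable-singularity theorem, and the energy gap for $n$-harmonic maps $S^{n}\to N$ would yield a non-constant such map of $n$-energy below the gap — a contradiction.
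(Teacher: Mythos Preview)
Your Moser-iteration sketch is correct and is precisely the argument the paper is invoking: the paper's own ``proof'' consists of one sentence citing Hungerb\"uhler for the case $R_0=1$ and then observing that the general case follows by the parabolic rescaling $u\mapsto u(x_0+R_0x,\,t_0+R_0^{\,n}t)$, which is exactly your first reduction. Your Bochner/Caccioppoli step, absorption of the critical term $|\nabla u|^{2}\Psi^{2}$ via the slice-wise Sobolev inequality under $n$-energy smallness, and the subsequent geometric iteration reproduce Hungerb\"uhler's proof, so there is nothing to correct and nothing materially different from what the paper cites.
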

\begin{proof} The proof   is  due to Hungerb\"uhler in  \cite{H} for $R_0=1$. If $R_0\neq 1$, one can prove it by a re-scaling argument.
 \end{proof}

\begin{lem}\label{2.5}
    Let $u:M\to N$ be a regular solution to the equation (\ref {n-flow}). Then
   there is a small constant $\varepsilon_1>0$ such that if  the inequality
\[
\sup_{t_0-R_0^n\leq t\leq t_0}\int_{B_{2R_0} (x_0)}|\nabla   u(x,t)|^n <\varepsilon_{1},
\] holds for some positive $R_0$,
then $\|u\|_{C^{1,\alpha}([t_0-\frac 1 2 R_0^n,  t_0] \times B_{R_0}(x_0))}$ is bounded by a constant depending on $E(u_0)$ and $R_0$.
\end{lem}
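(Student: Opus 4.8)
The plan is to bootstrap from the pointwise gradient bound of Lemma~\ref{lem:eregularity} using the interior regularity theory for evolutionary systems of $n$-Laplacian type. Choosing $\varepsilon_1$ no larger than the constants appearing in the earlier lemmas of this section, the smallness hypothesis lets us apply Lemma~\ref{lem:eregularity} on a suitable family of subcylinders (a routine covering argument absorbs the mismatch between the time-intervals $[t_0-R_0^n,t_0]$ and $[t_0-2R_0^n,t_0]$), obtaining
\[
\sup_{[t_0-\frac34 R_0^n,\,t_0]\times B_{R_0}(x_0)}|\nabla u|\ \le\ C\,R_0^{-1},\qquad C=C(M).
\]
Since \eqref{n-flow} and the $n$-energy are invariant under the parabolic dilation $v(y,s):=u(x_0+R_0y,\,t_0+R_0^ns)$ — with the rescaled metric $g_{R_0}(y):=g(x_0+R_0y)$ converging smoothly to the constant metric $g(x_0)$ as $R_0\to0$ — it suffices to treat the case $R_0=1$ and show that $\|v\|_{C^{1,\alpha}([-\frac12,0]\times B_{1/2})}$ is controlled by $\|\nabla v\|_{L^\infty([-\frac34,0]\times B_1)}$, $M$ and $N$. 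Undoing the dilation, and using $\|\nabla v\|_{L^\infty}\le C$ together with the fact that $C^{1,\alpha}$-norms rescale by explicit powers of $R_0$, then yields a bound with constant depending only on $E_n(u_0)$ and $R_0$.

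Next I would read \eqref{n-flow} as a degenerate parabolic system with bounded data. With $f:=|\nabla v|^{n-2}A(v)(\nabla v,\nabla v)$ we have $|f|\le C|\nabla v|^{n}\in L^\infty$, and $\partial_s v\in L^2$ by Lemma~\ref{energy} and the regular-solution hypothesis, so $v$ is a bounded weak solution with bounded gradient of
\[
\partial_s v=\nabla\cdot\bigl(|\nabla v|^{n-2}\nabla v\bigr)+f,\qquad \|f\|_{L^\infty}\le C .
\]
To this I would apply the interior H\"older-continuity theory for such systems — DiBenedetto's intrinsic-scaling method for degenerate parabolic equations, in the form worked out for the $p$-harmonic map heat flow (see also \cite{Hung}) — obtaining $v\in C^{0,\alpha_0}$ in the parabolic metric on a slightly smaller cylinder, with $\alpha_0$ and the norm depending only on $\|\nabla v\|_{L^\infty}$, $M$, $N$. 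In particular $v$ is continuous, so $A(v)(\nabla v,\nabla v)$ is H\"older continuous in the parabolic metric; feeding this into the gradient H\"older estimate for the degenerate parabolic $n$-system with bounded, H\"older data gives $\nabla v\in C^{0,\alpha}([-\frac12,0]\times B_{1/2})$ with the required dependence. Rescaling back completes the proof.

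I expect the main obstacle to be the degeneracy of the operator $\nabla\cdot(|\nabla u|^{n-2}\nabla u)$ on the zero set of $\nabla u$: there the system fails to be uniformly parabolic, so classical parabolic Schauder and De Giorgi--Nash--Moser estimates do not apply directly, and one must instead run the intrinsic-scaling argument on parabolic cylinders whose time-length is calibrated to the local size of $|\nabla u|$. A further subtlety is that \eqref{n-flow} is a \emph{system}, for which H\"older regularity of the gradient is delicate in general; here it is handled using the special harmonic-map structure and the fact that, once Lemma~\ref{lem:eregularity} has delivered an $L^\infty$ bound on $\nabla u$, the second-fundamental-form term enters only as a bounded perturbation.
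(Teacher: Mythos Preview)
Your proposal is correct and follows essentially the same route as the paper: both invoke the DiBenedetto--Friedman interior H\"older regularity theory for degenerate parabolic systems \cite{DF1}, as applied by Hungerb\"uhler \cite{Hung} to the $n$-harmonic flow. The paper's proof is in fact a one-line citation to that result, whereas you spell out the mechanism --- first extracting the pointwise gradient bound from Lemma~\ref{lem:eregularity}, then rescaling and treating the second-fundamental-form term as a bounded perturbation so that the intrinsic-scaling machinery applies --- but the substance is the same.
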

\begin{proof} As pointed out by Hungerb\"uhler in  \cite{H}, we can apply the result of DiBenedetto-Friedman \cite {DF1} to obtain a bound of    $\|u\|_{C^{1,\alpha}([t_0-\frac 1 2 R_0^n,  t_0] \times B_{R_0}(x_0))}$.
 \end{proof}

\begin{lem} (Local energy inequality under small condition) \label{small}
Let $u$  be a regular solution of
(\ref{n-flow}) on $B_{2R_0}\left(  x_{0}\right) \times\left[ 0,T\right]$.
There exists a sufficiently small constant $\varepsilon_{1}>0$ such that if
 \[
\sup_{t_0-T\leq t\leq t_0}\int_{B_{2R_0} (x_0)}|\nabla   u(x,t)|^n <\varepsilon_{1},
\]
then we have  for every $x\in B_{R}\left(  x_{0}\right)  $, any $R\leq R_0$ and any two constants $\tau$ and $s$ in $(t_0-T,t_0]$
 \begin{align*}  \int_{B_{R}(x_0) }|\nabla u|^n (\cdot ,s)\,dv\leq &\,\int_{B_{2R}(x_0)} |\nabla u|^n(\cdot , \tau )\,dv
+C\int_{s}^{\tau }\int_{B_{2R}(x_0)} |\partial_t u|^2\,dv\,dt\\ +&C\left (\frac {(\tau -s)}{R^n}\,\int_{B_{2R}(x_0)} |\nabla u|^n\,dv\, \int_{s}^{\tau
}\int_{B_{2R}(x_0)} |\partial_t u|^2\,dv\,dt \right )^{1/2} \end{align*}
for some constant $C$.
\end{lem}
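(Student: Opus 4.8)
The plan is to derive the local energy inequality of Lemma~\ref{small} by testing the flow equation \eqref{n-flow} against a suitably cut-off version of $u$ itself, minus its average, and then controlling the nonlinear error terms using the $\eps_1$-regularity machinery already established (Lemmas~\ref{2n estimate}--\ref{2.5}). First I would fix a cut-off function $\eta\in C_c^\infty(B_{2R}(x_0))$ with $\eta\equiv 1$ on $B_R(x_0)$, $0\le\eta\le 1$, and $|\nabla\eta|\le C/R$. Multiply \eqref{n-flow} by $(u-\bar u)\eta^n$, where $\bar u = \bar u_{\tau,R}$ is the average of $u(\cdot,\tau)$ over $B_{2R}(x_0)$ (the target $N$ is embedded in $\R^L$ so this subtraction makes sense), and integrate over $B_{2R}(x_0)\times[s,\tau]$. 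Integration by parts in space on the divergence term produces the good term $\int |\nabla u|^n\eta^n$ together with a boundary-type error $\int |\nabla u|^{n-2}\langle\nabla u,\nabla(\eta^n)\rangle(u-\bar u)$; the second fundamental form term $|\nabla u|^{n-2}A(u)(\nabla u,\nabla u)\cdot(u-\bar u)$ is of the same order once one uses $|A|\le C$ and $|u-\bar u|\le C$; and the time-derivative term yields $\int_{B_{2R}}|\nabla u|^n(\cdot,s)\eta^n - \int_{B_{2R}}|\nabla u|^n(\cdot,\tau)\eta^n$ up to a remainder $\int_s^\tau\int \partial_t u\cdot(u-\bar u)\,\partial_t(\eta^n)$, which vanishes since $\eta$ is time-independent, plus the genuine term $\int_s^\tau\int \partial_t u\cdot(u-\bar u)\eta^n$ coming from differentiating the product.

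Next I would estimate the error terms. The term involving $\partial_t u$ is bounded, after Cauchy--Schwarz in space-time, by
\[
\left(\int_s^\tau\!\!\int_{B_{2R}}|\partial_t u|^2\right)^{1/2}\left(\int_s^\tau\!\!\int_{B_{2R}}|u-\bar u|^2\eta^{2n}\right)^{1/2}.
\]
For the second factor I would invoke a Poincar\'e inequality on $B_{2R}(x_0)$ at a well-chosen time slice: $\int_{B_{2R}}|u-\bar u|^2\,dv\le C R^2\int_{B_{2R}}|\nabla u|^2\,dv \le C R^2 |B_{2R}|^{1-2/n}\big(\int_{B_{2R}}|\nabla u|^n\big)^{2/n}\le C R^n\big(\int_{B_{2R}}|\nabla u|^n\big)^{2/n}$, using H\"older to pass from the $L^2$ to the $L^n$ gradient norm and absorbing powers of $R$. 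Since the $L^n$-energy on $B_{2R}$ stays below $\eps_1$ uniformly in $t$ by hypothesis, integrating in time over $[s,\tau]$ gives a factor $(\tau-s)$, so this whole contribution is controlled by
\[
C\left(\frac{\tau-s}{R^n}\int_{B_{2R}}|\nabla u|^n\,dv\,\int_s^\tau\!\!\int_{B_{2R}}|\partial_t u|^2\,dv\,dt\right)^{1/2},
\]
which is precisely the last term in the claimed inequality. The spatial error $\int |\nabla u|^{n-2}|\nabla u||\nabla(\eta^n)||u-\bar u|$ and the $A(u)$-term are handled by Young's inequality: since $|\nabla(\eta^n)|\le C n\eta^{n-1}/R$ and $|u-\bar u|\le C$, one bounds them by $\delta\int|\nabla u|^n\eta^n + C_\delta R^{-n}\int_{B_{2R}}|\nabla u|^n\,dv\cdot(\tau-s)$-type quantities; choosing $\delta$ small absorbs the good term, and the remainder is dominated by the terms already present on the right-hand side (note the stated inequality allows a clean $C\int_s^\tau\int|\partial_t u|^2$ plus the mixed term — one checks the $R^{-n}$-error is itself bounded by the mixed square-root term via $ab \le \frac12(a^2+b^2)$ when the $\partial_t u$ integral is comparable, and otherwise can be folded in since $\eps_1$ is small). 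Rearranging and dropping $\eta$ (which is $\equiv 1$ on $B_R$) on the left yields the assertion.

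The main obstacle I anticipate is the handling of the degeneracy: unlike the harmonic case $n=2$, the quantity $|\nabla u|^{n-2}$ in the leading term means the testing produces $\int|\nabla u|^{n-2}\langle\nabla u,\nabla((u-\bar u)\eta^n)\rangle$ rather than a clean $\int|\nabla u|^2\eta^2$, and one must be careful that the resulting good term is exactly $\int|\nabla u|^n\eta^n$ with a favorable sign and that the $\nabla\eta$ cross-terms are genuinely lower order — this requires the small-energy hypothesis to close, via the interpolation/Sobolev estimates of Lemma~\ref{2n estimate}. A secondary technical point is justifying the Poincar\'e step at a fixed time: one needs $u(\cdot,t)\in W^{1,n}(B_{2R})$ for a.e.\ $t$ with the stated bound, which follows from the regular-solution hypothesis and Lemma~\ref{energy}; a standard Fubini argument picks a good time slice $\tau$ (or one simply uses that the bound holds for a.e.\ $t$ and integrates). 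Apart from these, the argument is a routine testing-and-absorbing computation, parallel in spirit to Struwe's and Hungerb\"uhler's local energy inequalities but bookkeeping the extra powers of $|\nabla u|$ throughout.
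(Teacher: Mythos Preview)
Your approach has a fundamental gap: testing \eqref{n-flow} against $(u-\bar u)\eta^n$ does \emph{not} produce the time-derivative of the local $n$-energy. After multiplying by $(u-\bar u)\eta^n$ and integrating, the left-hand side is
\[
\int_s^{\tau}\!\!\int_{B_{2R}}\partial_t u\cdot(u-\bar u)\,\eta^n\,dv\,dt \;=\; \frac12\Bigl[\int_{B_{2R}}|u-\bar u|^2\,\eta^n\,dv\Bigr]_{t=s}^{t=\tau},
\]
and no integration by parts converts this into the difference $\int|\nabla u|^n(\cdot,s)\,\eta^n - \int|\nabla u|^n(\cdot,\tau)\,\eta^n$ that you claim. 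Integrating the divergence term by parts in space does give $\int_s^\tau\int|\nabla u|^n\eta^n$, but that is a \emph{space-time} integral of the energy density; the lemma compares the $n$-energies at the two time slices $s$ and $\tau$, and your identity simply does not contain those quantities. A secondary symptom that something is wrong is your Poincar\'e/H\"older step: it actually produces a factor $R^{\,n/2}\bigl(\int|\nabla u|^n\bigr)^{1/n}$, not the $R^{-n/2}\bigl(\int|\nabla u|^n\bigr)^{1/2}$ appearing in the asserted bound, so the scaling does not match either.

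The paper instead multiplies \eqref{n-flow} by $\phi^n\,\partial_t u$, the standard test for local energy inequalities of gradient-flow type. Two structural identities then do all the work. First, $\langle|\nabla u|^{n-2}\nabla u,\,\nabla\partial_t u\rangle=\tfrac1n\,\partial_t|\nabla u|^n$, so the spatial integration by parts yields exactly $-\tfrac1n\,\tfrac{d}{dt}\int|\nabla u|^n\phi^n$ and hence, after integrating in $t$, the boundary terms $\int|\nabla u|^n(\cdot,\tau)\phi^n$ and $\int|\nabla u|^n(\cdot,s)\phi^n$. Second, the second-fundamental-form contribution $|\nabla u|^{n-2}A(u)(\nabla u,\nabla u)\cdot\partial_t u$ vanishes identically, since $A$ is normal to $N$ while $\partial_t u$ is tangent --- so there is no need to estimate it at all. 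The sole remaining error is the cross term $\int|\nabla u|^{n-1}|\partial_t u|\,\phi^{n-1}|\nabla\phi|$; Cauchy--Schwarz in space-time followed by the splitting $|\nabla u|^{2n-2}\phi^{n-2}|\nabla\phi|^2\le C(|\nabla u|^{2n}\phi^n+|\nabla u|^n|\nabla\phi|^n)$, together with the small-energy control on $\int|\nabla u|^{2n}\phi^n$ afforded by Lemma~\ref{2n estimate}, gives the square-root term with the correct $R^{-n}$ factor coming from $|\nabla\phi|^n\le CR^{-n}$.
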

\begin{proof}
Let $\phi$ be a cut-off function with support in $B_{2R_0}(x_0)$ such that $\phi =1$ in $B_{R_0}(x_0)$, $|\nabla \phi|\leq CR_0^{-1}$ and $|\phi|\leq 1$ in $B_{2R_0}(x_0)$.
Multiplying (\ref{n-flow}) by $\phi^n \partial_t u$, we have
\begin{eqnarray*}
        && \int_{B_{2R_0}(x_0)} |\frac {\partial u}{\partial t} |^2\phi^n\,dv=\int_{B_{2R_0}(x_0)} \left <\nabla \cdot (|\nabla u|^{n-2}\nabla u), \,\frac {\partial u}{\partial t}\right >\phi^n\,dv\\
        &&=-\int_{B_{2R_0}(x_0)} \left < |\nabla u|^{n-2}\nabla u,\, \frac {\partial \nabla u}{\partial t} \phi^n +\frac {\partial u}{\partial t}\phi^{n-1}\nabla \phi \right >\,dv\\
        &&\geq -\frac 1 n \frac d{dt} \int_{B_{2R_0}(x_0)}|\nabla u|^n \phi^n\,dv- C \int_{B_{2R_0}(x_0)}|\nabla u|^{n-1}  |\frac {\partial u}{\partial t} |\phi^{n-1}|\nabla \phi|  \,dv.
  \end{eqnarray*}
Note
\begin{eqnarray*}
        &&
\int_{B_{2R_0}(x_0)}|\nabla u|^{n-1}  |\frac {\partial u}{\partial t} |\phi^{n-1}|\nabla \phi|  \,dv\\
\leq &&\left (\int_{B_{2R_0}(x_0)} |\frac {\partial u}{\partial t} |^2\phi^n\,dv\right )^{1/2} \left (\int_{B_{2R_0}(x_0)}|\nabla u|^{2n-2}  \phi^{n-2}\,|\nabla \phi|^2  \,dv\right )^{1/2}
  \end{eqnarray*}
since
\[\int_{B_{2R_0}(x_0)}|\nabla u|^{2n-2}  \phi^{n-2}\,|\nabla \phi|^2  \,dv\leq C\int_{B_{2R_0}(x_0)} |\nabla u|^{2n}  \phi^{n}+ |\nabla u|^{n}|\nabla \phi|^n   \,dv.\]
Therefore, the claim is proved.
 \end{proof}

\begin{Proposition}\label{Theorem 3} Let $u$ be a regular solution to (\ref{n-flow}) in $M\times [0,\infty)$. For a sequence $t_i\to\infty$, there is a sub-sequence, still denoted by $t_i\to\infty$, such that
$u(\cdot , t_i)$ converges to an $n$-harmonic maps $u_{\infty}$ locally in $C^{1,\a }(M\backslash \{x^1,...,x^J\}; N)$ with some positive $\a<1$, where $u_{\infty}$ can be extended regularly on $M$.
\end{Proposition}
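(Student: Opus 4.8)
The plan is to combine the bound supplied by the regularity hypothesis with the $\varepsilon$-regularity theory of Lemmas \ref{lem:eregularity} and \ref{2.5} to extract a convergent subsequence away from a finite set of energy-concentration points, and then remove the singularities.

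First I would use the energy monotonicity of Lemma \ref{energy}: the map $s\mapsto E_n(u(\cdot,s);M)$ is nonincreasing and bounded below by $0$, and moreover $\int_0^\infty\int_M|\partial_t u|^2\,dv\,dt\leq E_n(u_0)<\infty$. The finiteness of this time-integral forces $\int_{t_i-1}^{t_i+1}\int_M|\partial_t u|^2\,dv\,dt\to 0$ along any sequence $t_i\to\infty$, so after passing to a subsequence we may additionally assume $\int_M|\partial_t u(\cdot,s)|^2\,dv$ is small on a large set of times near each $t_i$; combined with the bound in Lemma \ref{small} this gives that the spatial $n$-energy is essentially stationary on time intervals of fixed length around $t_i$.

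Next I would define the concentration set. Fix a small $\varepsilon_1>0$ as in Lemmas \ref{lem:eregularity}--\ref{2.5}, and let
\[
\mathcal{S}=\bigcap_{R>0}\Bigl\{x\in M:\ \liminf_{i\to\infty}\int_{B_{2R}(x)}|\nabla u(y,t_i)|^n\,dv\geq \varepsilon_1\Bigr\}.
\]
A standard covering argument using the uniform bound $E_n(u(\cdot,t_i);M)\leq E_n(u_0)$ shows $\mathcal{S}$ is finite, say $\mathcal{S}=\{x^1,\dots,x^J\}$ with $J\leq n\,E_n(u_0)/\varepsilon_1$. For any compact $K\subset M\setminus\mathcal{S}$, we can cover $K$ by finitely many balls $B_{2R}(x)$ on which $\int_{B_{2R}(x)}|\nabla u(\cdot,t_i)|^n\,dv<\varepsilon_1$ for all large $i$; together with the smallness of $\int|\partial_t u|^2$ near $t_i$, Lemma \ref{2.5} yields a uniform $C^{1,\alpha}$ bound for $u(\cdot,t_i)$ on $K$ (with a slightly smaller $\alpha$). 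By Arzelà--Ascoli and a diagonal argument over an exhaustion of $M\setminus\mathcal{S}$, a further subsequence of $u(\cdot,t_i)$ converges in $C^{1,\alpha'}_{\mathrm{loc}}(M\setminus\mathcal{S};N)$ to some map $u_\infty$. Passing to the limit in the weak formulation of \eqref{n-flow} and using $\partial_t u(\cdot,t_i)\to 0$ in $L^2_{\mathrm{loc}}$, the limit $u_\infty$ is a weak, hence (by elliptic regularity, DiBenedetto-Friedman) classical, $n$-harmonic map on $M\setminus\mathcal{S}$, and it inherits the finite-energy bound $E_n(u_\infty;M\setminus\mathcal{S})\leq E_n(u_0)$.

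Finally I would remove the singularities. Since $u_\infty$ is a finite-energy $n$-harmonic map on the punctured manifold $M\setminus\{x^1,\dots,x^J\}$, the removable-singularity theorem of Duzaar--Fuchs \cite{DF} applies: each $x^k$ is a removable singularity, so $u_\infty$ extends to a map in $C^{1,\alpha}(M;N)$, which is $n$-harmonic across the $x^k$. The main obstacle is the regularity step away from the concentration set — specifically, combining the degenerate-parabolic $\varepsilon$-regularity (Lemmas \ref{lem:eregularity} and \ref{2.5}, which rest on the DiBenedetto--Friedman estimates) with the $L^2$-smallness of $\partial_t u$ to produce a time-independent $C^{1,\alpha}$ bound at the sequence times $t_i$; the degeneracy of the $n$-Laplacian means one cannot appeal to standard linear parabolic estimates, and care is needed to ensure the constants do not degenerate as $t_i\to\infty$. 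The finiteness of $\mathcal{S}$ and the removable-singularity argument are comparatively routine given the cited results.
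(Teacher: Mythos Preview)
Your proposal is correct and follows essentially the same route as the paper: use Lemma~\ref{energy} to get $\int_{t_i-1}^{t_i}\int_M|\partial_t u|^2\to 0$, invoke the local energy inequality (Lemma~\ref{small}) to propagate the smallness of $\int_{B_{2R_0}}|\nabla u(\cdot,t_i)|^n$ backward over the parabolic time interval $[t_i-2R_0^n,t_i]$, then apply Lemmas~\ref{lem:eregularity}--\ref{2.5} for uniform $C^{1,\alpha}$ bounds, extract a convergent subsequence, and finish with Duzaar--Fuchs removability. The only cosmetic difference is that the paper defines the concentration set via $\limsup$ rather than your $\liminf$, which avoids an extra diagonal subsequence extraction when covering compact subsets of $M\setminus\mathcal{S}$.
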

\begin{proof}  By Lemma \ref{energy},  we know that $\int_0^{\infty }\int_M
|\partial_tu|^2 \,dvdt$ is finite, so we may choose a sub-sequence
$\{t_i\}$ such that as $t_i\to\infty $, $\partial_t u (\cdot , t_i)\to 0$
strongly in $L^2(M)$ and $\int_{t_i-1}^{t_i}\int_M |\partial_t u (\cdot , t)|^2\,dv\,dt\to 0$.  Moreover,
 there is a constant $\varepsilon_0>0$ such that  the singular points $\{x^1, ..., x^J\}$ are defined by the condition
\[ \limsup_{t_i \to \infty}  E_n(u(t_i); B_R(x^k)) \geq \varepsilon_0 \] for any $R\in (0, 2R_0]$  with some fixed $R_0>0$.

For each $x_0\in M\backslash \{x^1, ..., x^J\}$, there is a sufficiently small $R_0>0$ such that $B_{2R_0} (x_0)\subset  M\backslash \{x^1, ..., x^J\}$ and for all $i$,
\[\int_{B_{2R_0} (x_0)}|\nabla   u(x,t_i)|^n \,dv<\varepsilon_{0}\leq \frac {\varepsilon_1}2, \]
where $\varepsilon_1$ is the constant defined in Lemma \ref{small}.

By Lemma \ref{small}, we have  for any $s\in [t_i-2R_0^n, t_i]$ and  for sufficiently large $i$
\begin{align*}  \int_{B_{R_0}(x_0) }|\nabla u|^n (\cdot ,s)\,dv\leq &\,\int_{B_{2R_0}(x_0)} |\nabla u|^n(\cdot , t_i )\,dv
+C\int^{t_i}_{t_i-1}\int_{B_{2R_0}(x_0)} |\partial_t u|^2\,dv\,dt\\ +&C\left (\frac {(t_i -s)}{R_0^n}\,\int_{B_{2R_0}(x_0)} |\nabla u|^n\,dv\,  \int^{t_i}_{t_i-1}\int_{B_{2R_0}(x_0)} |\partial_t u|^2\,dv\,dt \right )^{1/2} \\
&<  \varepsilon_{1}. \end{align*}

By Lemma \ref{lem:eregularity}, we have
 \[\sup_{t\in [t_i-   R_0^n,  t_i],\,  x\in  B_{R_0}(x_0)} |\nabla u|^{n}(x , t  )\,dv \leq C  R_0^{-n}.\]
 Then using Lemma \ref{2.5}, there is a uniform bound of  $\|u(\cdot ,t_i)\|_{C^{1, \a }(B_{\frac 1 2 R_0}(x_0))}$, so $u(x,t_i)$ convergence to $u_{\infty}$ in $C^{1, \b }(B_{\frac 1 2 R_0})$  and hence in $C_{loc}^{1,\b}(M\backslash \{x^1, ..., x^J\})$ with  $\b<\a$, where $u_{\infty}\in C_{loc}^{1,\b}(M\backslash \{x^1, ..., x^J\})$ is an $n$-harmonic map.
By the removable singularities of  an $n$-harmonic map, $u_{\infty}$ can be extended to $C^{1,\a}(M)$.
\end{proof}

\section{No neck  result between the limiting map and bubbles as $t\to\infty$}

In this section, we generalize the no-neck result of Qing-Tian \cite{QT} to the case of the $n$-harmonic flow.
As suggested by Struwe \cite{St} and Qing \cite{Q}, the existence of   solutions of the heat flow for harmonic maps can be proved by a method of ``Palais-Smale sequences'' with tension fields $\tau(u)\in L^2$.
In the context of $n$-harmonic maps, the tension field $\tau(u)$ of $u$ is defined as follows:
\begin{equation}\label{approx_n_harmonic}
 \tau(u):=  \frac{1}{\sqrt{|g|}}\frac{\partial}{\partial x_{i}}\left[  |\nabla
u|^{n-2}g^{ij}\sqrt{|g|}\frac{\partial}{\partial x_{j}}u\right]+
|\nabla
u|^{n-2}A(u)(\nabla u,\nabla u),
\end{equation}
where $A$ is the second fundamental form of $N$.

If $\tau(u)=0$, $u$ is  an $n$-harmonic map. When $\tau(u) \in L^2(M)$ and $u$ satisfies an extra smoothness assumption in \eqref{approx_n_harmonic}, we define $u$ to be a regular approximated $n$-harmonic map as follow (See \cite{WW} for details):

\begin{definition}
We define a map $u\in W^{1,n}(M; N) \cap C^0(M; N)$ to be a regular approximated $n$-harmonic map if it satisfies the following conditions:
	\begin{enumerate}
 	 \item $\nabla\left(|\nabla u|^{\frac{n-2}{2}} \nabla u\right)\in L^2(M)$;
 	 \item There exist $\varepsilon>0$, $\alpha\in (0,1)$ and $C>0$ depending only on $M$, $N$ and $\|\tau(u)\|_{L^2}$ such that for any $B_{2_r }\in M$ and  $E_n(u; B_{2_r })\leq \varepsilon$, then
 \[ u\in C^\alpha ( B_r; N)\qquad \text{and}\qquad [u]_{C^\alpha(B_r(x))}\leq C.\]
	\end{enumerate}
\end{definition}

Let $\{u_i\}$ be a sequence of  regular approximated $n$-harmonic maps with   uniform  bounds of $E_n(u_i)$ and $\|\tau(u_i)\|_{L^2(M)}$.  Wang-Wei \cite {WW} proved that $\{u_i\}$  converges to an $n$-harmonic map $u_{\infty}$ strongly in $W_{loc}^{1,q} (M\backslash \{x^1, \cdots, x^L\})$ for any $q<2n$, where $u_{\infty}$ can be extended  to $C^{1,\a}(M)$. By reducing multi bubbles into a single bubble, they proved that there are a finite number of $n$-harmonic maps $\omega_{k,l} $   on $S^{n}$ with $k=1,\cdots, L$ and $l=1,\cdots J_k$  such that
\[\lim_{t_i \nearrow \infty }  E_n(u_i; M)= E_n(u_{\infty}; M)+\sum_{k=1}^{L} \sum_{l=1}^{J_k}   E_n  (\omega_{k,l}  , S^{n}). \]
Then we have
\begin{theorem}\label{no_neck2}  Let $\{u_i\}$ be the sequence of  regular approximated $n$-harmonic maps with   uniform  bounds of $E_n(u_i)$ and $\|\tau(u_i)\|_{L^2(M)}$,  and  let $\omega_{k,l} $ be the above bubbles. Then
there is no neck between the limiting map $u_{\infty}$ and bubbles $\omega_{k,l}$; \\i.e. the image  $$u_{\infty}(M)\cup \bigcup_{k,l}\omega_{k,l}(S^n)$$ is a connected set.
\end{theorem}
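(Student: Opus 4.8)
The plan is to adapt the neck analysis of Ding-Tian and Qing-Tian \cite{DT, QT} to the degenerate $n$-harmonic setting, with the oscillation estimate of Lemma \ref{Osc} playing the role of Ding-Tian's Lemma 2.1. First I would invoke the bubble-neck decomposition for $n$-harmonic maps (\cite{DT}; see also \cite{Hong, WW}) to reduce to the case of a single bubble attached to the limit along a single neck: after passing to a subsequence and working in a coordinate ball $B_1\subset M$ around a blow-up point, one may assume that $u_i\to u_\infty$ in $C^{1}_{\mathrm{loc}}(B_1\setminus\{0\})$, that a single nonconstant bubble $\omega$ forms at the origin at a concentration scale $\lambda_i\to0$, with $u_i(\lambda_i\,\cdot)\to\omega$ in $C^{1}_{\mathrm{loc}}(\R^n)$ and $\omega$ extending to an $n$-harmonic map $S^n\to N$, and that $\omega(\R^n)$ can meet $u_\infty(B_1)$ only at the limit $\omega_\infty:=\lim_{|y|\to\infty}\omega(y)$. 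By the bubble-tree structure, connectedness of $u_\infty(M)\cup\bigcup_{k,l}\omega_{k,l}(S^n)$ then follows once one shows, for each such neck,
\begin{equation*}
\lim_{R\to\infty}\ \lim_{\delta\to0}\ \limsup_{i\to\infty}\ \operatorname{osc}_{\,B_\delta\setminus B_{\lambda_i R}} u_i \;=\;0 ,
\end{equation*}
because this forces $\omega_\infty=u_\infty(0)$ and makes every joint of the tree a single point.

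Next I would assemble three ingredients on the neck $\mathcal N_i(R,\delta):=B_\delta\setminus B_{\lambda_i R}$. \emph{(a) No concentrated energy in the neck:} $\lim_{R\to\infty}\lim_{\delta\to0}\limsup_{i}E_n(u_i;\mathcal N_i(R,\delta))=0$, for otherwise rescaling at an intermediate scale would produce a further nontrivial bubble, contradicting that the decomposition already exhausts the concentrating energy (this is part of the Wang-Wei analysis \cite{WW}). \emph{(b) $\varepsilon$-regularity:} by property (2) in the definition of a regular approximated $n$-harmonic map, together with the standard interior gradient estimate for $n$-harmonic maps, once $E_n(u_i;B_{2r}\setminus B_{r/2})<\varepsilon$ one has the scaled pointwise bound $\sup_{B_{3r/2}\setminus B_{2r/3}}|x|^{n}\,|\nabla u_i(x)|^{n}\le C\,E_n\bigl(u_i;B_{2r}\setminus B_{r/2}\bigr)$. \emph{(c) A Pohozaev-type identity:} testing \eqref{approx_n_harmonic} against the radial field $r\,\partial_r u_i$ and integrating over annuli yields, for $n$-harmonic maps, a relation on each sphere $\partial B_r$ between the radial and tangential parts of $\tfrac1n|\nabla u_i|^{n}$, with a conserved flux term that is small by (b) and an error controlled by $r\,\|\tau(u_i)\|_{L^2(B_{2r})}$. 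Combining (a)--(c) --- the smallness in (a) turns the Pohozaev relation into a three-annulus differential inequality --- I would deduce that the dyadic energies $e_j:=E_n(u_i;P_j)$, with $P_j:=B_{2^{-j+1}}\setminus B_{2^{-j}}$, decay geometrically in $j$ away from the two ends of the neck, so that $\sum_j e_j^{1/n}$ is controlled by the (small) energies at the two ends plus a geometrically summable tension term.

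Finally I would apply Lemma \ref{Osc} on each dyadic annulus $P_j$ of the neck,
\begin{equation*}
\operatorname{osc}_{\,P_j} u_i \;\le\; C\Bigl( E_n\bigl(u_i;P_{j-1}\cup P_j\cup P_{j+1}\bigr)^{1/n} + 2^{-j}\,\bigl\|\tau(u_i)\bigr\|_{L^2(P_{j-1}\cup P_j\cup P_{j+1})} \Bigr) ,
\end{equation*}
and sum over all $j$ covering $\mathcal N_i(R,\delta)$. The tension terms sum to at most $C\,\delta\,\sup_i\|\tau(u_i)\|_{L^2(M)}$, which tends to $0$ as $\delta\to0$ since the tension fields are uniformly bounded in $L^2$; the energy terms are controlled by $\sum_j e_j^{1/n}$, which tends to $0$ as $R\to\infty$ and $\delta\to0$ by the geometric decay of (c). This gives the displayed oscillation limit, hence the absence of a neck, and therefore the connectedness of $u_\infty(M)\cup\bigcup_{k,l}\omega_{k,l}(S^n)$.

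I expect the main obstacle to be the degeneracy and nonlinearity of the $n$-Laplacian. For $n=2$ the Pohozaev identity, the oscillation estimate, and the resulting exponential energy decay all rest on the underlying linear and conformal structure (the Hopf differential, compensated compactness arguments); for $n\ge3$ the weight $|\nabla u|^{n-2}$ vanishes where $\nabla u=0$, so the integrations by parts and the interior estimates behind Lemma \ref{Osc} require care, and the Pohozaev identity now couples radial and tangential derivatives through the full Lagrangian $|\nabla u|^{n}$, to be handled under only an $L^{n}$-smallness hypothesis. Proving Lemma \ref{Osc} itself is thus the technical heart of the matter; granting it, the remaining task is to carry the geometric energy decay and the $r$-weighted tension bounds through the iterated limit $\limsup_i$, $\delta\to0$, $R\to\infty$ uniformly in $i$.
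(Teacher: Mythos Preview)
Your overall architecture---reduce to a single neck, use Pohozaev to drive a dyadic energy decay, then apply Lemma~\ref{Osc} annulus by annulus and sum---is the same as the paper's. Two points, however, need correction.

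First, you misquote Lemma~\ref{Osc}: the exponent on the energy is $\tfrac{1}{2(n-1)}$, not $\tfrac{1}{n}$, and the tension term carries the weight $r^{\,n/(2(n-1))}\bigl(\int|\tau|^2\bigr)^{1/(2(n-1))}$, not $r\,\|\tau\|_{L^2}$. These coincide only when $n=2$. With the correct (weaker) exponent one must control $\sum_j e_j^{1/(2(n-1))}$ over an unbounded number of dyadic scales, so knowing only that the total neck energy is small (your ingredient~(a)) is insufficient; a genuine geometric decay of the $e_j$ is required.

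Second, and more importantly, the sentence ``combining (a)--(c) turns the Pohozaev relation into a three-annulus differential inequality'' is exactly where the paper does the real work, and your proposal does not supply the mechanism. Pohozaev (Lemma~\ref{Pohozaev}) bounds $\int_{\partial B_r}|\nabla u_i|^n$ by the \emph{tangential} energy $\int_{\partial B_r}|\nabla_T u_i|^n$ plus a tension error; to close the loop one must show the tangential energy is small relative to the full energy. The paper does this by introducing, on each $P_{j,t}=B_{2^{t-j}}\setminus B_{2^{-t-j}}$, a radial $n$-harmonic comparison function $h_{i,j,t}(r)$ (a logarithmic interpolation between the spherical averages of $u_i$ at the two boundary radii), and then using the monotonicity inequality for the $n$-Laplacian,
\[
\int_{P_{j,t}}|\nabla(u_i-h_{i,j,t})|^n \le C\int_{P_{j,t}}\bigl\langle |\nabla u_i|^{n-2}\nabla u_i-|\nabla h_{i,j,t}|^{n-2}\nabla h_{i,j,t},\ \nabla(u_i-h_{i,j,t})\bigr\rangle,
\]
together with the equation and Lemma~\ref{Osc} (to bound $\|u_i-h_{i,j,t}\|_{C^0}$ and hence absorb the second-fundamental-form term). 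Since $h_{i,j,t}$ is purely radial, $\int_{P_{j,t}}|\nabla_T u_i|^n\le\int_{P_{j,t}}|\nabla(u_i-h_{i,j,t})|^n$, and feeding this back into the integrated Pohozaev inequality yields the ODE $f_j'(t)\ge \tfrac{1}{C}f_j(t)-Ce^{\lambda_n(t-j)}$ for $f_j(t)=E_n(u_i;P_{j,t})$. Integrating gives $f_j(2)\le C\bigl(e^{-L_j/C}\varepsilon^{2(n-1)}+e^{-j/C}\bigr)$, which makes $\sum_j f_j(2)^{1/(2(n-1))}$ summable. Your ingredients (a) and (b) (the pointwise $\varepsilon$-regularity bound is in fact not used in the paper) do not by themselves produce this tangential-energy control; the radial comparison function and the $n$-Laplacian monotonicity are the missing pieces.
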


We begin with the following $\varepsilon$-regularity estimate for approximated $n$-harmonic maps. In particular, we generalize the Ding-Tian estimate (see \cite{DT}, Lemma 2.1), which is a crucial estimate to the proof of no-neck result.

\begin{lem}\label{Osc} For  $n\geq 2 $,
   let $u\in W^{1,n}(M,N) \cap C^0(M,N)$  be an approximated $n$-harmonic map. Then there exists a small constant $\varepsilon>0$  such that if $E_n(u, B_r)\leq \varepsilon$ then
   \begin{align}\label{osclemma}
    \|u\|_{osc(B_\frac{r}{2})} \leq C \left(\int_{B_{r}}  |\nabla   u |^{n}\,dv\right)^\frac{1}{2(n-1)} + Cr^\frac{n}{2(n-1)}\left( \int_{B_{r}}|\tau(u)|^2 \,dv \right)^\frac{1}{2(n-1)}.
   \end{align}
\end{lem}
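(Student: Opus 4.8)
The plan is to adapt the Ding--Tian oscillation estimate to the degenerate $n$-Laplace setting by establishing a decay estimate for the rescaled $n$-energy on small balls and then summing it over dyadic annuli. Concretely, for $x_0\in B_{r/2}$ and $\rho\le r/2$, I would work with the scale-invariant quantity $E(\rho):=\int_{B_\rho(x_0)}|\nabla u|^n\,dv$ together with the companion term $T(\rho):=\rho^{n/2}\big(\int_{B_\rho(x_0)}|\tau(u)|^2\big)^{1/2}$. The first step is to test the equation \eqref{approx_n_harmonic} against $(u-\bar u_{x_0,\rho})\eta^n$ for a suitable cutoff $\eta$ supported in $B_{2\rho}(x_0)$, which after integration by parts and using the $\varepsilon$-regularity hypothesis $E_n(u;B_r)\le\varepsilon$ (so that $|u-\bar u|$ is small, by condition (2) of the definition of regular approximated $n$-harmonic map) gives a reverse-H\"older / Caccioppoli-type inequality of the form
\begin{equation*}
\int_{B_\rho(x_0)}|\nabla u|^n\,dv \le C\,\omega(\varepsilon)\int_{B_{2\rho}(x_0)}|\nabla u|^n\,dv + C\,\rho^{n/2}\Big(\int_{B_{2\rho}(x_0)}|\nabla u|^n\Big)^{1/2}\Big(\int_{B_{2\rho}(x_0)}|\tau(u)|^2\Big)^{1/2},
\end{equation*}
where $\omega(\varepsilon)\to 0$. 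A hole-filling iteration then yields the Morrey-type decay $E(\rho)\le C(\rho/r)^{2\beta}E(r)+(\text{contribution of }\tau)$ for some $\beta>0$ depending on how small $\varepsilon$ is chosen; one wants $\beta$ as close to the borderline as possible, but any fixed $\beta>0$ will do provided it beats the exponent needed below.

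The second step converts energy decay into oscillation control. On each dyadic annulus $A_j=B_{2^{-j}r}(x_0)\setminus B_{2^{-j-1}r}(x_0)$, the H\"older continuity from condition (2) applied at scale $2^{-j}r$ bounds $\operatorname{osc}_{A_j}u$ by $C\big(E(2^{-j}r)\big)^{1/n}+C(2^{-j}r)^{\cdots}(\int|\tau|^2)^{\cdots}$; but to recover the sharp exponent $\tfrac{1}{2(n-1)}$ one instead uses a slightly different interpolation, estimating $\operatorname{osc}$ on a single annulus by a mean-value / Poincar\'e argument: on $A_j$, $|\nabla u|$ is controlled pointwise (by the local gradient estimate, Lemma \ref{lem:eregularity}, applied in the elliptic/stationary form, or by condition (2)), so
\[
\operatorname{osc}_{A_j} u \le C\!\int_{A_j}\!|\nabla u|\,\frac{dx}{|x|^{n-1}} \le C\Big(\int_{A_j}|\nabla u|^n\Big)^{1/n}\cdot(2^{-j}r)^{0},
\]
and then I would bootstrap this using the degenerate structure: writing $v=|\nabla u|^{(n-2)/2}\nabla u\in L^2$ with $\nabla v\in L^2$ (condition (1)), one has $|\nabla u|^{(n-1)}\sim |v|^{2(n-1)/n}$, and a Poincar\'e inequality for $v$ on the annulus converts the $L^2$-norm of $\nabla v$ into a bound on $\int_{A_j}|\nabla u|^{n}$ times lower-order terms. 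Summing the telescoping series $\sum_j \operatorname{osc}_{A_j}u$ and applying the geometric decay from Step 1 gives a convergent geometric series whose sum is controlled by $E(r)^{1/(2(n-1))}+r^{n/(2(n-1))}(\int_{B_r}|\tau|^2)^{1/(2(n-1))}$, which is exactly \eqref{osclemma}. The exponent $\tfrac{1}{2(n-1)}$ arises precisely because the natural quantity that decays geometrically is not $E(\rho)$ itself but $\int_{B_\rho}|\nabla(|\nabla u|^{(n-2)/2}\nabla u)|^2$, whose square root scales like an $n/2$-power of the energy.

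\textbf{Main obstacle.} I expect the crux to be Step 1: establishing a genuine \emph{geometric} decay rate for the scaled energy, rather than just boundedness, in the presence of the degeneracy $|\nabla u|^{n-2}$. For the harmonic case ($n=2$) Ding--Tian exploit linearity of the leading operator and the conformal invariance in a very clean way; here the Caccioppoli inequality has the factor $\omega(\varepsilon)$ coming from the smallness of $|u-\bar u|$ (via $C^\alpha$-control), and one must check that this factor can genuinely be made $<1$ uniformly — this is where the definition of regular approximated $n$-harmonic map, with its $C$ depending only on $M,N,\|\tau(u)\|_{L^2}$, is essential. A secondary technical point is handling the $\tau(u)$-term in the iteration so that its contribution accumulates to the stated power $\tfrac{1}{2(n-1)}$ of $\int|\tau|^2$ and not something worse; this requires keeping the $\rho^{n/2}$ weight on $\tau$ throughout and using that $\sum_j (2^{-j})^{\text{something positive}}$ converges. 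Once the decay estimate is in hand, the passage to oscillation is standard dyadic summation.
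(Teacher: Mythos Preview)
Your proposal takes a genuinely different route from the paper, and it contains a real gap.

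\textbf{The gap.} In Step~2 you claim that on each dyadic annulus $A_j$,
\[
\operatorname{osc}_{A_j} u \;\le\; C\!\int_{A_j}\!|\nabla u|\,\frac{dx}{|x|^{n-1}}
\;\le\; C\Big(\int_{A_j}|\nabla u|^n\Big)^{1/n}.
\]
The first inequality is not valid: the potential representation of the oscillation involves the kernel $|x-y|^{1-n}$, not $|x|^{1-n}$. Once you restore the correct kernel, the second H\"older step fails because $\int_{A_j}|x-y|^{-n}\,dy$ diverges at $y=x$; equivalently, $W^{1,n}\not\hookrightarrow L^\infty$ in dimension $n$, so no estimate of the form $\operatorname{osc}\le C\|\nabla u\|_{L^n}$ can hold on a fixed annulus. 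You therefore need $\nabla u\in L^p$ for some $p>n$ before you can control the oscillation --- and nothing in your Step~1 (the Caccioppoli/hole-filling scheme, which at best produces slow Morrey decay of $\int|\nabla u|^n$) supplies that. Your final remark that the right object is $\int_{B_\rho}|\nabla(|\nabla u|^{(n-2)/2}\nabla u)|^2$ is pointing at the correct mechanism, but you never actually bring it into the argument.

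\textbf{What the paper does instead.} The paper's proof is a \emph{single-scale} argument with no iteration and no dyadic summation. It goes directly through the higher integrability that the degenerate structure provides. First, Morrey's embedding (Theorem~7.17 in Gilbarg--Trudinger) with an exponent $p=\tfrac{2n(n-1)}{n-2+a}>n$ gives
\[
\|u\|_{\operatorname{osc}(B_{r/2})}\;\le\; Cr^{\frac{n-a}{2(n-1)}}\Big(\int_{B_{3r/4}}\big||\nabla u|^{n-2}\nabla u\big|^{q}\Big)^{\frac{1}{q(n-1)}},\qquad q=\tfrac{2n}{n-2+a}>2.
\]
Second, Sobolev--Poincar\'e applied to the vector $f=|\nabla u|^{n-2}\nabla u$ (with Sobolev exponent just below $2$) bounds this $L^q$ norm by $\|\nabla f\|_{L^2}$ plus a harmless lower-order $L^1$ term. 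Third, $\int|\nabla(|\nabla u|^{n-2}\nabla u)|^2$ is estimated from the equation: multiply \eqref{approx_n_harmonic} by $\nabla\!\cdot(|\nabla u|^{n-2}\nabla u)\,\eta^n$ and invoke Lemma~\ref{2n estimate}, using the small-energy hypothesis $E_n(u;B_r)\le\varepsilon$ to absorb the $|\nabla u|^{2n}$ term. This yields
\[
r^n\!\int_{B_{3r/4}}|\nabla(|\nabla u|^{n-2}\nabla u)|^2\,dv\;\le\;C\!\int_{B_r}|\nabla u|^n\,dv+Cr^n\!\int_{B_r}|\tau(u)|^2\,dv,
\]
and taking the $\tfrac{1}{2(n-1)}$ power produces exactly the exponents in \eqref{osclemma}.

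In short: the paper obtains the missing $L^p$ ($p>n$) integrability of $\nabla u$ in one stroke from the $W^{1,2}$ control of $|\nabla u|^{n-2}\nabla u$, rather than trying to manufacture it by iteration. If you want to salvage your approach, the fix is to insert this $W^{1,2}\to L^{2^*}$ embedding for $|\nabla u|^{n-2}\nabla u$ \emph{before} any summation; but once you do that, the dyadic machinery becomes unnecessary.
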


\begin{proof}
Let  $\phi$ be a cut-off function in $C_0^{\infty}(B_r )$ with $\phi \equiv 1$ in $B_{\frac r2}$ and $|\nabla \phi|\leq Cr^{-1}$ and set $\bar u=\frac 1{|B_{\frac{3}{4}r}|}\int_{B_{\frac{3}{4}r}} u \,dv$. For a sufficient small $a>0$, we apply Theorem 7.17 in \cite{GT} with $p= \frac{2n(n-1)}{n-2+a}>n$, $\gamma =1-\frac np$, and the Poincar\'{e} inequality to obtain

\begin{align}
\|u\|_{osc(B_\frac{r}{2})} = &\sup_{x,y\in B_{\frac r2}}  | u(x)-u(y) |\leq  2\sup_{x\in B_{\frac{3}{4}r}} |\, (u(x)-\overline{u})\,\phi(x) \,| \nonumber \\
&\leq Cr^{1-\frac {n-2+a}{2(n-1)}} \left (\int_{B_{\frac{3}{4}r} } |\nabla [(u-\overline{u})\,\phi]|^{\frac{2n(n-1)}{n-2+a}}\,dv\right )^\frac{n-2+a}{2n(n-1)}\nonumber\\
&\leq C r^{\frac{n-a}{2(n-1)}}\left (\int_{B_{\frac{3}{4}r} } |\nabla u(x)|^{\frac{2n(n-1)}{n-2+a}}\,dv\right )^\frac{n-2+a}{2n(n-1)} \nonumber\\
& +Cr^{\frac{n-a}{2(n-1)}} \left (\int_{B_{\frac{3}{4}r} }  |(u(x)-\overline{u}) \nabla \phi| ^{\frac{2n(n-1)}{n-2+a}}\,dv\right )^\frac{n-2+a}{2n(n-1)}\nonumber\\
&\leq Cr^{\frac{n-a}{2(n-1)}} \left (\int_{B_{\frac{3}{4}r}} \left\vert \nabla
u\right\vert ^{\frac{2n(n-1)}{n-2+a}} \,dv\right )^\frac{n-2+a}{2n(n-1)}\nonumber\\
 & =C r^{\frac{n-a}{2(n-1)}} \left(\int_{B_{\frac{3}{4}r}}  \left|\, |\,\nabla u |^{n-2}\nabla u \right|^\frac{2n}{n-2+a} \,dv \right )^{\frac{n-2+a}{2n} \frac{1}{(n-1)}}.
 \end{align}

 By using the Sobolev-Poincar\'{e} inequality on $B_1$ (Page 174 in \cite{GT}), we have for $p<n$
 \[\left (\int_{B_1} |f-f_{B_1} |^{p^*}\,dv\right )^{1/p^*}\leq C \left (\int_{B_1} |\nabla f  | ^{p}\,dv\right )^{p}. \]
Choosing $p= \frac {2n}{n+a} <2 $ such that $q =\frac{2n}{n-2+a}=\frac {np}{n-p}=p^*$ and re-scaling  from $B_1$ to $B_r$ and using H\"older's inequality,  we have
\begin{align}
&\quad\left( \frac{r^{(n-1)q}}{r^n}\int_{B_r} |\,|\nabla u|^{n-2} \,\nabla u | ^{q}\,dv \right)^{\frac{1}{q}}\\
&\leq C \left(\frac{r^{2n}}{r^n}\int_{B_r}|\,\nabla(|\nabla u|^{n-2}\,\nabla u) |^2 \,dv\right)^{1/2}
 + \frac{C}{r}\int_{B_r}|\nabla u|^{n-1}\,dv.\nonumber
\end{align}
By the H\"older inequality, we have
\[\left (\frac{1}{r}\int_{B_r}|\nabla u|^{n-1}\,dv\right )^{\frac 1{n-1}} \leq  C\left(\int_{B_{\frac{3}{4}r}} |\nabla u|^{n}\,dv\right)^{\frac{1}{n}}.\]
Then
substituting (3.4) into (3.3), we obtain
 \begin{align}
&\qquad\|u\|_{osc(B_\frac{r}{2})}  \leq C   r^{\frac{n-a}{2(n-1)}} \left(\int_{B_{\frac{3}{4}r}}  \left|\, |\,\nabla u |^{n-2}\nabla u \right|^q \,dv \right )^{\frac 1 q \frac{1}{(n-1)}} \\
& \leq C    r^{\frac{n-a}{2(n-1)}}r^{\frac n{q(n-1)}-1} r^\frac{n}{2(n-1)}\left( \int_{B_r}|\,\nabla(|\nabla u|^{n-2}\,\nabla u) |^2 \,dv\right)^\frac{1}{2(n-1)}\nonumber\\
 &+ C r^{\frac{n-a}{2(n-1)}} r^{\frac n{q(n-1)}-1} \left (\frac{1}{r}\int_{B_r}|\nabla u|^{n-1}\,dv\right )^{\frac 1{n-1}} \nonumber\\
 &= C r^{\frac{n}{2(n-1)}} \left(\int_{B_{\frac{3}{4}r}} |\nabla(|\nabla u|^{n-2} \nabla u)|^2 \,dv\right)^\frac{1}{2(n-1)}+ C \left (\frac{1}{r}\int_{B_r}|\nabla u|^{n-1}\,dv\right )^{\frac 1{n-1}} \nonumber \\
 &\leq C  \left(r^n\int_{B_{\frac{3}{4}r}} |\nabla(|\nabla u|^{n-2} \nabla u)|^2 \,dv\right)^\frac{1}{2(n-1)} + C\left(\int_{B_{\frac{3}{4}r}} |\nabla u|^{n}\,dv\right)^{\frac{1}{n}} \nonumber\end{align}
 by  noting that $\frac{n-a}{2(n-1)}+\frac n{q(n-1)}-1=0$ with $q =\frac{2n}{n-2+a}$.

\vspace{10pt}
Multiplying (\ref {approx_n_harmonic}) by $\nabla \cdot (|\nabla u|^{n-2}\nabla u)\,\eta^n$, we have
\begin{align}\label{div-estimate}
  \int_{B_r}|\nabla \cdot (|\nabla u|^{n-2}\nabla u)|^2 \eta^n\,dv\leq  \int_{B_r}|\nabla \cdot (|\nabla u|^{n-2}\nabla u)|(\tau(u) +C |\nabla u |^n ) \eta^n\, dv. \nonumber
\end{align}
Now, using Young's inequality, we have
 \begin{eqnarray*}
         \int_{B_r}  |\nabla \cdot (|\nabla u|^{n-2}\nabla u)|^2 \eta^n \,dv\leq
          C\int_{B_r} (|\tau(u)|^2 +|\nabla u|^{2n}) \eta^n \,dv.
  \end{eqnarray*}
Using Lemma \ref{2n estimate} again, it yields that
\begin{eqnarray*}
 \int_{B_{\frac{3}{4}r}} |\nabla u |^{2n}+|\nabla^2 u |^2 |\nabla u |^{2n-4}\,dv \leq C \int_{B_{r}}  (1+r^{-n}) |\nabla   u |^{n}+ |\tau(u)|^2 \,dv.
    \end{eqnarray*}
Therefore
\begin{eqnarray*}
  && \left (r^n \int_{B_{\frac 34 r}} |\nabla u |^{2n}+|\nabla^2 u |^2 |\nabla u |^{2n-4} \,dv\right )^\frac{1}{2(n-1)} \\
  &&\leq C \left (\int_{B_{r}}  |\nabla   u |^{n}+ r^n |\tau(u)|^2 \,dv\right )^\frac{1}{2(n-1)}\\
  &&\leq C \left(\int_{B_{r}}  |\nabla   u |^{n}\,dv\right)^\frac{1}{2(n-1)} + C r^\frac{n}{2(n-1)} \left(\int_{B_{r}}|\tau(u)|^2\,dv \right)^\frac{1}{2(n-1)}.
\end{eqnarray*}
We finish the proof by putting these estimates together.

\end{proof}

To analyze the behavior of approximated $n$-harmonic maps on the neck region, we need the following Pohozaev type inequality, which was proved in \cite{WW}:

\begin{lem}\label{Pohozaev} For $n\geq 2$, let $u\in W^{1,n}(M, N) \cap C^{1,\alpha}(M, N)$ to be a regular approximated $n$-harmonic map with tension field $\tau(u) \in L^2(M)$. Then, for any ball $B_r\subset M $, we have
\begin{equation}
   \int_{\partial B_r} | \nabla u | ^n \,ds\leq C(n) \left( \int_{\partial B_r} |\nabla_T \,u|^n\,ds+ \int_{B_r} |\tau(u)| \, |\nabla\, u|\,dv \right),
\end{equation}
\noindent where $\nabla_T u$ is the tangential gradient on the boundary $\partial B_r$.
\end{lem}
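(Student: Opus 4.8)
The plan is to derive the identity by testing the equation \eqref{approx_n_harmonic} against the radial vector field $x\cdot\nabla u$ on the ball $B_r$, which is the standard Pohozaev multiplier adapted to the $n$-energy. Concretely, I would work in normal coordinates centered at the center of $B_r$ and multiply the tension-field equation $\tau(u)=\operatorname{div}(|\nabla u|^{n-2}\nabla u)+|\nabla u|^{n-2}A(u)(\nabla u,\nabla u)$ by $x^i\partial_i u$ and integrate over $B_r$. The term involving the second fundamental form contributes nothing after contraction with $x^i\partial_i u$, since $A(u)(\nabla u,\nabla u)\perp T_uN$ while $x^i\partial_i u\in T_uN$; this is the usual reason the Pohozaev identity for harmonic-type maps does not see the target geometry. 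The remaining divergence term is integrated by parts: writing $\int_{B_r}\operatorname{div}(|\nabla u|^{n-2}\nabla u)\cdot(x\cdot\nabla u)\,dv$, one boundary integral and one bulk integral arise. The bulk integral, after the standard manipulation $\partial_j(x^i\partial_i u)=\partial_j u+x^i\partial_i\partial_j u$ together with $|\nabla u|^{n-2}\partial_j u\cdot x^i\partial_i\partial_j u=\tfrac1n x^i\partial_i(|\nabla u|^n)$ and one more integration by parts in $x^i$, collapses to a multiple of $\int_{B_r}|\nabla u|^n\,dv$ plus the boundary term $\int_{\partial B_r}r\,|\nabla u|^n\,ds$.

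Carrying this out, the Pohozaev identity on $B_r$ takes the schematic form
\[
\Big(\tfrac{n-n}{n}\Big)\text{-type bulk terms} + r\int_{\partial B_r}\Big(\tfrac{1}{n}|\nabla u|^n - |\partial_\nu u|^2|\nabla u|^{n-2}\Big)\,ds = -\int_{B_r}(x\cdot\nabla u)\cdot\tau(u)\,dv,
\]
so that on $S^n$ the interior $n$-energy terms cancel (the $n$-energy being conformally invariant in dimension $n$) and one is left with a boundary relation between $\int_{\partial B_r}|\partial_\nu u|^2|\nabla u|^{n-2}\,ds$, $\int_{\partial B_r}|\nabla_T u|^n\,ds$, and the error term $\int_{B_r}|x\cdot\nabla u|\,|\tau(u)|\,dv\le r\int_{B_r}|\nabla u|\,|\tau(u)|\,dv$. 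Decomposing $|\nabla u|^2=|\partial_\nu u|^2+|\nabla_T u|^2$ on $\partial B_r$ and using Young's inequality to absorb cross terms, one bounds $\int_{\partial B_r}|\nabla u|^n\,ds$ by a constant multiple of $\int_{\partial B_r}|\nabla_T u|^n\,ds$ plus $\int_{B_r}|\tau(u)|\,|\nabla u|\,dv$, which is exactly the claimed inequality (the factor $r$ being harmless since $\partial B_r$ has the stated scaling, or absorbed into $C(n)$ after rescaling to unit radius).

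The main obstacle I anticipate is twofold. First, the degeneracy of the $n$-Laplacian where $\nabla u=0$: the multiplier computation manipulates $|\nabla u|^{n-2}\nabla^2 u$, which need not be classically meaningful, so one must justify the integration by parts using the regularity $u\in C^{1,\alpha}$ together with $\nabla(|\nabla u|^{(n-2)/2}\nabla u)\in L^2$ — i.e. argue by approximation (replace $|\nabla u|^{n-2}$ by $(|\nabla u|^2+\delta)^{(n-2)/2}$, derive the identity, let $\delta\to 0$) or invoke the distributional Pohozaev identity already established in \cite{WW}. Second, the metric $g$ is not flat: working in geodesic normal coordinates introduces lower-order terms $O(|x|^2)$ in $g_{ij}$ and its derivatives, producing extra bulk integrals that must be controlled by $\int_{B_r}|\nabla u|^n\,dv$ and hence folded into the constant or the error term; for $r$ small this is routine but needs to be stated. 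Since the excerpt explicitly attributes this lemma to \cite{WW}, in the write-up I would present the multiplier computation as the guiding identity and then cite \cite{WW} for the rigorous distributional justification, keeping the curvature error terms explicit only to the extent needed for the stated form.
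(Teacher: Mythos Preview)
Your proposal is correct and follows essentially the same route as the paper: multiply \eqref{approx_n_harmonic} by the Pohozaev field $x\cdot\nabla u$, use orthogonality of $A(u)(\nabla u,\nabla u)$ to $T_uN$ to drop the second-fundamental-form term, integrate by parts to obtain the boundary identity $\int_{B_r}\langle\tau(u),x\cdot\nabla u\rangle\,dv=\tfrac{r}{n}\int_{\partial B_r}|\nabla u|^n\,ds-r\int_{\partial B_r}|\nabla u|^{n-2}|\partial_r u|^2\,ds$, then split $|\nabla u|^2=|\partial_r u|^2+|\nabla_T u|^2$ and close with Young's inequality. The paper presents this only as a sketch (citing \cite{WW}) and does not discuss the degeneracy or metric-error issues you flag, so your proposal is if anything more careful than the paper's own treatment.
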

\begin{proof} For completeness,
we sketch the proof here. Multiplying (\ref{approx_n_harmonic}) by $x\,\cdot \nabla u$ and integrating over $B_r$, we have
\begin{align*}
	&\quad \int_{B_r}\left\langle\tau(u),\, x\cdot \nabla u\right\rangle \,dv \\
&= \frac{1}{n}\int_{B_r}\left\langle x\, , \, \nabla(|\nabla\, u|)^n\right\rangle \,dv+ \int_{B_r} | \nabla u| ^n\,dv - r\int_{\partial B_r} |\nabla u|^{n-2}\, \left| \frac{\partial u}{\partial r}\right|^2\, ds\\
	&=\frac{r}{n} \int_{\partial B_r} |\nabla u|^n \, ds -  r\int_{\partial B_r} |\nabla u|^{n-2}\, \left| \frac{\partial u}{\partial r}\right|^2\, ds,
\end{align*}
where we use the fact that
\begin{align*}
 \int_{B_r} \left\langle x\, , \, \nabla(|\nabla\, u|)^n\right\rangle\,dv = r\int_{\partial B_r} |\nabla u|^n \, ds - n\int_{B_r} |\nabla u|^n\,dv
\end{align*}
and $|\nabla u|^2 = \left| \frac{\partial u}{\partial r}\right|^2 +  \left| \nabla _Tu\right|^2.$

Rearranging the inequality and by adding $(n-1)\int_{\partial B_r} |\nabla u|^{n-2} \left| \nabla _Tu\right|^2 ds $ to the both sides we have
\begin{align*}
(n-1) \int_{\partial B_r} |\nabla u|^{n} \,ds &\leq n\int_{B_r} |\tau(u) | |\nabla u| \,dv+ n \int_{\partial B_r} |\nabla u|^{n-2}\left(  \left| \nabla _T u\right|^2\right) ds.
\end{align*}
Then the claim follows from using Young's inequality.
\end{proof}

Now we prove Theorem 3.
\begin{proof}
By using the standard bubbling arguments as in \cite{DT} and \cite{WW}, one can reduce multiple bubbles to a
single bubble.
We assume that $0$ is the single blowing up point of $\{u_i\}$ and there is only one bubble in $B_1$.
Then, we follow the approach of \cite{QT} and \cite{LZ} to extend the no-neck result to the case of the $n$-harmonic map flow.

Suppose $r_n \, R = 2^{-j_n}$ and  $\delta = 2^{-j_0}$  for any $j_0<j<j_n$. Then, we denote
$$L_j = \min \{j-j_0 ,\, j_n-j\}\quad\text{and}\quad P_{j,t}= B_{2^{t-j}}\setminus B_{2^{-t-j}} \quad \text{for} \quad t\in(0, L_j].$$

For sufficiently large $i$, we assume that
\begin{align}\label{neck-energy-control}
  E_n(u_i, B_{2^{1-j}} \setminus B_{2^{-j}})\leq \varepsilon^{2(n-1)}, \qquad \text{for any}\, j_0\leq j \leq j_n.
\end{align}

Let
$$ h_{i,j,t}(2^{\pm t-j}) = \frac{1}{|S^{n-1}
|}\int_{S^{n-1}} u_i(2^{\pm t-j},\theta)\, d\theta$$
and
\begin{equation}
  h_{i,j,t}(r) = h_{i,j,t}(2^{t-j})+(h_{i,j,t}(2^{-t-j}) - h_{i,j,t}(2^{t-j}))\frac{\ln (2^{-t+j}\, r)}{-2t\, \ln 2\hfill}.
\end{equation}
Note that the tangential derivative of $h_{i,j,t}(r)$ is zero in $n$-dimensional spherical coordinates. Therefore, the Laplace operator can be reduced to the following form:
\begin{align*}
  \Delta h_{i,j,t} = \frac{d^2\,h_{i,j,t} }{d^2r} + \frac{n-1}{r}\, \frac{d h_{i,j,t} }{dr},
\end{align*}
which yields that
\begin{align*}
  \rm div(|\nabla h_{i,j,t}|^{n-2}\,\nabla\, h_{i,j,t}) &= \left| \frac{d \,h_{i,j,t}}{dr}\right|^{n-2}\left(\frac{d^2\,h_{i,j,t} }{d^2r} + \frac{n-1}{r}\, \frac{d h_{i,j,t} }{dr}\right)\\
  &\,+ \frac{n-2}{2}\left| \frac{d \,h_{i,j,t}}{dr}\right|^{n-4}\frac{d h_{i,j,t}}{dr} \, \frac{d}{dr}\left| \frac{d h_{i,j,t}}{dr}\right|^2\\
  &=(n-1)\left| \frac{d \,h_{i,j,t}}{dr}\right|^{n-2}\left( \frac{d^2\,h_{i,j,t} }{d^2r}+\frac{1}{r}\frac{d\,h_{i,j,t} }{dr}\right)\\
  &=0.
\end{align*}
This implies that $h_{i,j,t}(r)$ is also a symmetric $n$-harmonic map for $r\in [2^{-t-j},\, 2^{t-j}]$.
By the well-know result of the $n$-Laplace operator, we note that
\begin{align*}
  &\int_{P_{j,t}} |\nabla(u_i-h_{i,j,t})|^{n} \,dv\\
  &
  \leq C\int_{P_{j,t}}\left\langle(|\nabla u_i|^{n-2}\, \nabla u_i - |\nabla h_{i,j,t}|^{n-2} \, \nabla h_{i,j,t}), \, \nabla (u_i-h_{i,j,t}) \right\rangle \,dv.
\end{align*}
for some constant $C>0$.

By integration by parts, we have
\begin{align*}
  &\int_{P_{j,t}}\left\langle(|\nabla u_i|^{n-2}\, \nabla u_i - |\nabla h_{i,j,t}|^{n-2} \, \nabla h_{i,j,t}), \, \nabla (u_i-h_{i,j,t}) \right\rangle \,dv.\\
  &= - \int_{P_{j,t}} \left\langle div( |\nabla u_i|^{n-2}\, \nabla u_i ), \, (u_i - h_{i,j,t})\right\rangle \,dv\\
  &+ \int_{\partial P_{j,t}} \left\langle(|\nabla u_i |^{n-2} (u_i)_r - |\nabla h_{i,j,t}|^{n-2} (h_{i,j,t})_r),\, (u_i-h_{i,j,t})\right\rangle \,dv\\
  &=  \int_{P_{j,t}} \left\langle \left(A(u_i)(\nabla\,u_i, \nabla u_i)  |\nabla u| ^{n-2} + \tau(u_i)\right),\,  (u_i - h_{i,j,t})\right\rangle \,dv\\
  &+ \int_{\partial P_{j,t}} \left\langle \left(|\nabla u_i |^{n-2} (u_i)_r - |\nabla h_{i,j,t}|^{n-2} (h_{i,j,t})_r\right),\, (u_i-h_{i,j,t})\right\rangle \,dv.
\end{align*}
By Lemma \ref{Osc}, we obtain
\begin{align}\label{CPjt}
\|u_i-h_{i,j,t}\|_{C^0(P_{j,t})}&\leq \|u_i-h_{i,j,t}(2^{j-t})\|_{C^0(P_{j,t})} + \|u_i-h_{i,j,t}(2^{-j-t})\|_{C^0(P_{j,t})} \nonumber\\
&\leq 2 \| u_i\|_{osc(P_{j,t})}\nonumber\\
&\leq  C \left(\int_{P_{j-1,t} \cup P_{j,t} \cup P_{j+1,t}}  |\nabla   u |^{n}\right)^\frac{1}{2(n-1)}\nonumber\\
&\quad + C \,(2^{t-j+1})^\frac{n}{2(n-1)}\left( \int_{B_{2^{t-j+1}}}|\tau(u)|^2  \right)^\frac{1}{2(n-1)}\nonumber\\
&\leq C\left(\varepsilon + \delta^{\frac{n(t+1)}{2(n-1)}} \right)\leq C\varepsilon.
\end{align}

By \eqref{CPjt}, we have
\begin{align*}
  &\int_{P_{j,t}} |\nabla(u_i-h_{i,j,t})|^n \, dv \\
  & \leq C\int_{P_{j,t}}\left < |\nabla u_i|^{n-2}\, \nabla u_i - |\nabla h_{i,j,t}|^{n-2} \, \nabla h_{i,j,t}, \,\nabla (u_i-h_{i,j,t})\right >\, dv\\
  &\leq C  \int_{P_{j,t}}\left < (A(u_i)(\nabla\,u_i, \nabla u_i) \,|\nabla u| ^{n-2} +\tau(u_i)), \,(u_i-h_{i,j,t}) \right >\, dv \\
  & + C\int_{\partial P_{j,t}} \left <|\nabla u_i |^{n-2}\, (u_i)_r,\,  (u_i-h_{i,j,t})\right >\,ds\\
  &\leq C \left( \varepsilon\int_{ P_{j,t}}\, |\nabla u_i| ^{n} \, dv + \varepsilon\|\tau(u_i)\|_{L^2(P_{j,t})} \, 2^{(t-j)  \frac{n}{2}}\right)\\
  & + C \int_{\partial P_{j,t}} |\nabla u_i |^{n-2}\, |(u_i)_r| \,\, |u_i-h_{i,j,t}|\,ds \\
  &=C (\varepsilon (I_1 +2^{(t-j)\,   \frac{n}{2}}) +  I_2),
\end{align*}
where we set
\[I_1: =f_j(t) =\int_{P_{j,t}} |\nabla u_i|^n \,dv \qquad\text{and}\qquad I_2 :=  \int_{\partial P_{j,t}} |\nabla u_i |^{n-2} \,\,|(u_i)_r|  \,\, |u_i-h_{i,j,t}| \, ds.\]

Using the fact that $\frac{d\,2^x}{dx} = \ln2 (2^x)$, this implies
$$f_j'(t)= \ln 2 \, \left( 2^{t-j} \int_{ \{ 2^{t-j}\} \times S^{n-1}} |\nabla u_i|^n \, ds +2^{-t-j} \int_{ \{ 2^{-t-j}\} \times S^{n-1}} |\nabla u_i|^n  \, ds\right). $$
By  the Poincar\'{e} inequality and   H\"{o}lder's inequality, we have
\begin{align} \label{I2}
  I_2&=  \int_{\partial P_{j,t}} (|\nabla u_i |^{n-2} \,\,|(u_i)_r| )\,\, |u_i-h_{i,j,t}| \, ds\\
  &\leq   \int_{ \{ 2^{t-j}\} \times S^{n-1}} |\nabla u_i|^{n-1} |u_i-h_{i,j,t}| \, ds + \int_{ \{ 2^{-t-j}\} \times S^{n-1}} |\nabla u_i|^{n-1} |u_i-h_{i,j,t}| \, ds\nonumber\\
  & \leq  \left( \int_{ \{ 2^{t-j}\} \times S^{n-1}} (|\nabla u_i|^{n-1})^\frac{n}{n-1}  \, ds\right)^ \frac{n-1}{n} \left( \int_{ \{ 2^{t-j}\} \times S^{n-1}} |u_i-h_{i,j,t}|^n \, ds  \right)^\frac{1}{n}\nonumber\\
  &+  \left( \int_{ \{ 2^{-t-j}\} \times S^{n-1}} (|\nabla u_i|^{n-1})^\frac{n}{n-1}  \, ds\right)^ \frac{n-1}{n} \left( \int_{ \{ 2^{-t-j}\} \times S^{n-1}} |u_i-h_{i,j,t}|^n \, ds  \right)^\frac{1}{n}\nonumber\\
  & \leq  \left( \int_{ \{ 2^{t-j}\} \times S^{n-1}} (|\nabla u_i|)^{n}  \, ds\right)^ \frac{n-1}{n} \left( \int_{ \{ 2^{t-j}\} \times S^{n-1}} |u_i-h_{i,j,t}|^n \, ds  \right)^\frac{1}{n}\nonumber\\
  &+    \left( \int_{ \{ 2^{-t-j}\} \times S^{n-1}} (|\nabla u_i|)^{n} \, ds\right)^ \frac{n-1}{n} \left( \int_{ \{ 2^{-t-j}\} \times S^{n-1}} |u_i-h_{i,j,t}|^n \, ds  \right)^\frac{1}{n} \nonumber\\
  &\leq C f_j'(t).\nonumber
\end{align}

Note that $h_{i,j,t}$ is the average of $u_i$ over $S^{n-1} $. Then we can estimate the tangential energy by
\begin{align}\label{tangnet energy}
    \int_{P_{j,\,t}} |\nabla_T \,u_i|^n \, dv&\leq  \int_{P_{j,t}} \left(| (u_i - h_{i,j,t})_r |^2 +  |\nabla _T \,u_i|^2   \right)^\frac{n}{2} \, dv \\
    &= \int_{P_{j,t}} |\nabla(u_i-h_{i,j,t})|^n \, dv\nonumber\\
    &\leq C (\varepsilon I_1 +2^{(t-j)\,   \frac{n}{2}} +  I_2).\nonumber
\end{align}
By Lemma \ref{Pohozaev}, given a regular approximated $n$-harmonic map $u_i$, for all $r\in \left[\lambda_nR ,\delta\right]$,  we have
\begin{equation}\label{m-harmonic_maps_bounard_energy}
  \int_{\partial B_r} | \nabla u_i | ^n\,ds \leq  C(n) \left( \int_{\partial B_r}  |\nabla_T \,u_i|^n\,ds+ \int_{B_r} |\tau(u_i)| \, |\nabla u_i| \,dv\right).
\end{equation}
Integrating (\ref{m-harmonic_maps_bounard_energy}) in $r$ from $r=2^{-t-j}$ to $r=2^{t-j}$, using H\"{o}lder's inequality and (\ref{tangnet energy}), we obtain
\begin{align}\label{f}
      &\quad f_j(t)= \int_{P_{j,t}} |\nabla u_i|^n \,dv \\
      &\leq C\left( \int_{P_{j,\,t}}  |\nabla_T \,u_i|^n\,dv+ \int^{2^{t-j}}_{2^{-t-j}} \|\tau(u_i)\|_{L^{2}(B_r)}\, \|\nabla u_i\|_{L^2(B_r)}\,dr \right)\nonumber\\
      &\leq C  ( \varepsilon( I_1 +2^{(t-j)\,   \frac{n}{2}}) +  I_2)) + C \int^{2^{t-j}}_{0} \|\tau(u_i)\|_{L^{2}(B_r)}\, \|\nabla u_i\|_{L^n(B_r)}r^{\frac{n-2}2}\,dr \nonumber \\
      &\leq   C (\varepsilon I_1 +2^{(t-j)\,   \frac{n}{2}} +  I_2 ) \leq C (\varepsilon  f_j(t) +2^{(t-j)\,   \frac{n}{2(n-1)}}) +   C f_j'(t)  .\nonumber
\end{align}
Let $\lambda_n = \frac{n}{2(n-1)}\ln 2$. Choosing  $\varepsilon$ sufficiently small in (\ref{f}), we have
$$0\leq f'_t(t) -\frac{1}{C} f_j(t) +C e^{\lambda_n(t-j)}. $$
Now, assuming that $\lambda_n>\frac{1}{C}$ for a sufficiently large $C$, it implies
\begin{equation}\label{c_1}
  0\leq \left(e^{-\frac{t}{C}} f_j(t)\right)' +C e^{\lambda_n(t-j)}\, e^{-\frac{t}{C}}.
\end{equation}
Integrating (\ref{c_1}) in $t$ over $[2,L_j]$, this gives
\begin{align}\label{f_j(2)_estimate}
     f_j(2)&\leq C \left( e^{\frac{-L_j}{C}}\,f_j(L_j)  + e^{-\lambda_n\,j}\,  e^{\left(\lambda_n-\frac{1}{C}\right)\,L_j} \right)\nonumber\\
     &\leq C \left( e^{\frac{-L_j}{C}}\,f_j(L_j)  + e^{-\lambda_n\,j}\,  e^{\frac{-j}{C}} \right),
\end{align}
where we note that
$$ P_j= B_{2^{1-j}}\setminus B_{2^{-j}} , \,\,\, P_{j-1}\cup P_{j} \cup \,P_{j+1}  = B_{2^{2-j} }\setminus B_{2^{-1-j}} \,\,\,\text{and} \,\,\, f_j(2) =\int_{P_{j,\,2}} |\nabla u_i|^n \,dv. $$

 Applying Lemma \ref{Osc} on $P_j$, we have
 \begin{align}\label{estimate_1}
\|u_i\|_{osc(P_j)}&\leq C\,\left(\int_{P_{j-1} \cup P_{j} \cup P_{j+1}}  |\nabla   u_i |^{n}\, dv\right)^\frac{1}{2(n-1)}    \\
& + (2^{-j})^{\frac n{2(n-1)}}\,C \left(\int_{B_{2^{2-j}}} |\tau(u_i)|^2 \, dv\right)^\frac{1}{2(n-1)} \nonumber\\
&\leq C(f_j^{\frac 1{2(n-1)}}(2) + e^{- \lambda_n \, j} ).\nonumber
\end{align}

For $j\geq L_j$, under the assumption \eqref{neck-energy-control} at the beginning of the proof, we can choose a small $\delta$ such that $f_j(L_j)\leq \varepsilon^{2(n-1)}$ and (\ref{f_j(2)_estimate}) yields that
\begin{align}\label{estimate_2}
     f^{\frac 1{2(n-1)}}_j(2)&\leq C \left( e^{\frac{-L_j}{C}}\,f_j(L_j)  + e^{-\lambda_n\,j}\,  e^{\left(\lambda_n-\frac{1}{C}\right)\,L_j} \right)^{\frac 1{2(n-1)}}\\
     &\leq C \left( e^{\frac{-L_j}{C }}\,\varepsilon  +   e^{\frac{-j}{C}} \right).\nonumber
\end{align}
Substituting \eqref{estimate_2} into \eqref{estimate_1} and summing over $j_0\leq j\leq j_n$, we have
\begin{align*}
  \|u_i\|_{osc(B_{2\delta}\setminus B_{2\, r_n R})}&\leq \sum^{j_n}_{j=j_0}\|u_i\|_{osc(P_j)}\\
  &\leq C\sum^{j_n}_{j=j_0}\left(( e^{-\frac{L_j}{C}} \varepsilon + e^{\frac{-j}{C}}) + e^{- \lambda_n\, j}\right)\\
  &\leq C\left( \sum^{\infty}_{i=0}e^{-\frac{i}{C}} \varepsilon + \sum^\infty_{j=j_0} e^{\frac{-j}{C}}\right)\\
  &\leq C\left(\varepsilon + \delta ^\frac{1}{C} \right).
\end{align*}
Since
$$ \|u_i\|_{osc ( B_\delta \setminus B_{ 2 r_nR})} = \sup _{x,y\in B_\delta \setminus B_{2 r_nR}} | u_i(x) - u_i(y)|$$
is controlled by $\delta$, this implies that
$$ u(B_1) \,\cup\,  \omega_1(\mathbb{R}^n)$$ is a connected set. Thus, there is no neck between the limiting map and the bubbles for regular approximated $n$-harmonic maps with tension fields bounded in $L^2$.

\end{proof}

Now we complete the proof of Theorem \ref{no_neck}.

\begin{proof}[\textbf{Proof of Theorem \ref{no_neck}}]
We briefly describe the procedure of ``bubble blowing'' by following the idea from Ding-Tian \cite{DT}. First, we recall that the removable singularity theorem of $n$-harmonic maps \cite{DF}. Moreover, recall the the gap theorem: there is a constant $\varepsilon_{g}>0$ such that if $u$ is an $n$-harmonic map on $S^n$ satisfying $\int_{S^n}|\nabla u|^n<\varepsilon_{g}$, then $u$ is a constant on $S^n$.

Let $u(x,t)$ be a regular solution of the $n$-harmonic flow in $M\times [0,\infty)$.
As $t_i \to \infty$,  it was showed in Proposition \ref{Theorem 3} that a subsequence of  $u_i:=u(t_i)$
converges  to an $n$-harmonic map $u_{\infty}$ locally in $C^{1,\a}(M\backslash\{x^1,\cdots, x^L\} )$. Furthermore, there is a constant $\varepsilon_0>0$ such that  the singular points (energy concentration points) $\{x^k\}$ are defined by the condition
\[ \limsup_{t_i \to \infty}  E_n(u(t_i); B_R(x^k)) \geq \varepsilon_0 \] for any $R\in (0, R_0]$, with some fixed $R_0>0$.

Let $x^1$ be a singular point. Then we find sequences $x^1_i \to x^1$  such that
\[|\nabla u(x^1_i)|=\max_{B_{R_0}(x^1)} | \nabla u(x,t_i)|,\quad  r_i^1 =\frac 1 {|\nabla u(x^1_i)|}\to 0. \]
In  the neighborhood $B_{R_0}(x^1)$ of the singularity $x^1$,   we define the rescaled map

\begin{equation}
	  u^1_i(x)= u(x^1_i+r^1_ix, t_i).
\end{equation}
Then the rescaled map $u^1_i$ satisfies
\begin{equation}\label{Re-n-flow} (r^1_i)^n\frac {\partial  u}{\partial t}=\frac{1}{\sqrt{|g|}}\frac{\partial}{\partial x_{i}}\left[  |\nabla
  u|^{n-2}g^{ij}\sqrt{|g|}\frac{\partial u}{\partial x_{j}}\right]
+|\nabla u|^{n-2}A(u)(\nabla  u,  u).
\end{equation}
Now, $u^1_i$ converge to  $ u_{1,\infty}$ locally in $\R^n$  as $i\to \infty$, and $u_{1,\infty}$ can be extended to a nontrivial $n$-harmonic map on $S^n$ (see \cite {DF}). We call  $\tilde u_{1,\infty}$ to be the first bubble, which satisfies
\begin{equation}\label{First} E_n(u_{1, \infty}; \R^n) =\lim_{R \to \infty} \lim_{t_i \to \infty} E_n(u^1_i; B_{R}(0))=
\lim_{R \to \infty}\lim_{t_i \to \infty}E_n(u_i; B_{Rr^1_i}(x^1)). \end{equation}

  At each singular point $x^k$,  there are  finitely many blow-up points $x_{i}^{k,l}$  and  bubbles $\{\omega_{k,l}\}_{l=1}^{J_k}$ on $\R^n$ (see details in \cite{Hong}); i.e.
at each  $k$, there are sequences $x_{i}^{k,l}\to p^{k,l}$ for some $p^{k,l}$and $r_{i}^{k,l}\to 0$ with $\lim_{i\to \infty}\frac {r_{i}^{k,l}}{r_i^{
k,l-1}}=\infty$ such that passing to a subsequence,
$u_{i}^{k,l}(x):=u_{i} (x^{k,l}_{i}+ r_{i}^{k,l}  x)$ converges to  $\omega^{k,l}$, where $\omega_{k,l}$  is an $n$-harmonic map in   $\R^n$. These mean  that there are  finite   numbers $r_{i,k}$, finite points $x^{k,l}_i$, positive  constants $R_{k,l}$, $\delta_{k,l}$ and  finitely many number  of non-trivial $n$-harmonic
maps
$\omega_{k,l}$ on $\R^n$ such that

\begin{align}\label{neckenergy}
& \lim_{i\to \infty} E_n(u_i; M) \\
=&E_n\left (u_{\infty}; M\backslash \{x_k\}_{k=1}^L\right )+\sum_{k=1}^L\sum_{l=1}^{J_k}
E_n(\omega_{k,l}; \R^n)\nonumber\\
+ &\sum_{k=1}^L \sum_{l=1}^{J_k} \lim_{R_{k,l} \to \infty} \lim_{\delta_{k,l} \to 0} \lim_{i \to \infty} E_n( u_{i}^{k,l}; B_{\delta_{k,l}} \backslash B_{R_{k,l}r_{i}^{
k,l}}(x^{k,l}_i)).\nonumber
\end{align}
Moreover, at each neck region $B_{\delta_{k,l}} \backslash B_{R_{k,l}r_{i}^{k,l}}(x^{k,l}_i)$ in (\ref{neckenergy}), for all $i$ sufficiently large, we have
 \begin{equation}\label{Basic} \int_{B_{2r} \backslash
B_r (x^{k,l}_{i})}{|\nabla  u_{k,l,i}|^n dv} \leq \varepsilon \end{equation}
 for all $r \in (\frac{R_{k,l}r^{k,l}_i}{4},2\delta_{k,l})$, where   $\varepsilon$ is a  fixed constant to be chosen sufficiently small.  In fact, \eqref{Basic} is  a crucial observation by Ding and Tian \cite{DT}. This implies that the neck energy can be controlled during bubbling procedure by reducing multiple bubbles to a
single bubble case, which leads to the proof of the energy identity for harmonic maps in \cite{DT}. For the case of $n$-harmonic maps, we complete the proof of the energy identity by   using a result of Wang and Wei (See Theorem B of \cite{WW}) .
Now, we can choose a subsequence of time $t_i\to \infty $ such that  $\lim_{i\to \infty}\left\|\frac {\partial u}{\partial t}(\cdot ,t_i)\right\|_{L^2(M)} $
is bounded. This completes a proof of Theorem 1 by using Theorem 3.
\end{proof}

\section{Finite-time blow-up of the $n$-harmonic map flow}
\label{sec:main_result}
As an application of the ``no-neck'' result, we will  construct an example  that the $n$-harmonic flow with initial value $u_0$ blows up in finite time. The proof here is to use  similar ideas  in  \cite{LY}. Due to that there are several modifications for the case of $n$-harmonic maps,   we give a proof for completeness here.

\subsection{Width of $n$-harmonic maps in the covering space}
We follow the geometric setting as in Sections 3-4 of \cite{LY} to construct an example  of finite time blowup of the $n$-harmonic flow. The idea is to construct a proper target manifold $N$ such that we can find infinitely many  initial maps $u_0:S^n\to N$ such that the $n$-harmonic flow   blows up in finite time.

For $m > n$, let the target manifold $N= X\,\#\,T^m$   be the connected sum of $X$  with the torus
$T^m$. Here $X$ is a  closed $m$-dimensional manifold with
nontrivial $\pi_n(X)$. Thus, there exists a smooth map $h: S^n \to X $ such that it is not homotopic to a constant map.
Note that $N$ can be separated into $N_1$ and $N_2$ by an embedding sphere $S^{m-1}\subset N$. In particular, $N\setminus N_1$ and $N\setminus N_2$ are homeomorphic to $X$ and $T^m$ respectively.
For each $l=0, 1,2,...$, let $U_l$ denote a small neighborhood of $p_l$, which is diffeomorphic to a $m$-dimensional ball and $V\subset X$ denotes an open set which is diffeomorphic to a ball.

$\R^m$ is the universal cover of $T^m$ with $G=\mathbb{Z}^m$ as the covering transformations group. Now, for any point $p_0\in \mathbb{R}^m$,  its orbit under the transformation  group $G$ is the set $\{p_l\}_{l=0}^\infty\subset \R^m$. Let $U_0$ be a small ball in $\R^m$ and its orbit under the transformation  group $G$ is a family of balls $\{U_l\}_{l=0}^\infty\subset \R^m$. Now,  we can find a cover of $N$ by modifying $\R^m$. For each $l=0,\cdots, \infty$, we remove the  small ball $U_l$ from $\mathbb{R}^m$ for $l=1,2,...$ by adding a copy of $X\setminus V$, which we identify $\partial U_l$ by the  boundary of $X\setminus V$. We  denote by $X_l$ the copy of $X\setminus V$ through $\partial U_l$. This new complete and non-compact manifold is   denoted by $\tilde{N}$ and the transformation  group $G$ act naturally on $\tilde{N}$. Let $\tilde{N}$ to be a cover of $N$ and $\tilde{g}$ be the corresponding lift metric.

For a continuous map $u: S^n\to N$,  we define its    ``width'' of $u$ in a set $S\subset S^{n-1}$ through its lift map $\tilde u$ in the covering space $(\tilde{N}, \tilde{g})$  by

\begin{equation}\label{width_of_the_map}
	\mathbf{\mathcal{W}}(u; S):= \sup_{x,y\in S}\, d_{(\tilde{N},\,\tilde{g})} \left(\, \tilde{u}(x), \,  \tilde{u}(y) \,\right).
\end{equation}

We begin with the following lemma that gives an upper bound for the width.
\begin{lem}(Bounded width lemma)\label{bounded_width}
If $u: \mathbb{R}^n \to N$ is an $n$-harmonic map with $E_n(u)< C_1$ for a constant $C_1>0$, there exists a constant $C_2$, depending only on $C_1$ and $N$  such that the $\mathcal{W}(u; S^{n})< C_2$.
\end{lem}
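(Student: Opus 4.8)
The plan is to prove the bounded width lemma by combining three ingredients: the energy bound, the no-neck/bubble-tree structure, and the discreteness of the deck transformation group $G = \mathbb{Z}^m$ acting on $\tilde N$. The key point is that a lift $\tilde u$ of an $n$-harmonic map $u: \R^n \to N$ with finite energy has image contained in a region of $\tilde N$ whose "combinatorial diameter" (number of distinct sheets $X_l$ and translates $U_l$ it meets) is controlled by $C_1$.

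First I would set up the lift. Since $\R^n$ is simply connected (for $n \geq 2$), any continuous $u: \R^n \to N$ lifts to $\tilde u: \R^n \to \tilde N$, unique up to the $G$-action; the width $\mathcal{W}(u; S^n)$ does not depend on which lift we choose. Next, by the removable-singularity theorem for $n$-harmonic maps (Duzaar--Fuchs \cite{DF}), $u$ extends to an $n$-harmonic map on $S^n$ with $E_n(u; S^n) < C_1$, and $\tilde u$ is then a continuous map on the compact space $S^n$, so its image is compact and connected in $\tilde N$. The strategy is to bound $d_{\tilde g}(\tilde u(x), \tilde u(y))$ by bounding the metric diameter of $\tilde u(S^n)$.

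The core estimate is an energy-quantization argument. Recall the gap theorem stated in the proof of Theorem \ref{no_neck}: there is $\varepsilon_g > 0$ so that any $n$-harmonic map on $S^n$ with $n$-energy below $\varepsilon_g$ is constant; equivalently, any nonconstant $n$-harmonic bubble carries energy at least $\varepsilon_g$. Using this together with Lemma \ref{Osc} (oscillation control on balls of small energy) and Lemma \ref{Pohozaev} (the Pohozaev inequality with $\tau(u) = 0$, which forces the tangential and radial parts of the energy to balance on annuli), one runs exactly the neck analysis of Theorem \ref{no_neck2} --- but now on the single map $u$ on $S^n$ rather than a sequence. Concretely, I would cover $S^n$ (via stereographic coordinates, $\R^n \cup \{\infty\}$) by a controlled number of regions: finitely many "bubble" balls where the energy is concentrated, and "neck" annuli on which the energy is below $\varepsilon$; the number of bubble balls is at most $C_1/\varepsilon_g$. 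On each bubble ball, $\tilde u$ has bounded oscillation in the $\tilde g$-metric because the image sits in one compact fundamental piece (one $U_l$ together with at most boundedly many attached copies $X_l$, whose diameters are uniformly bounded). On each neck annulus, the no-neck argument of Theorem \ref{no_neck2} --- the differential inequality $f_j' \geq \tfrac1C f_j$ for the energy $f_j(t)$ on dyadic annuli, which with $\tau(u)=0$ gives exponential decay and a summable oscillation bound $\sum_j \|u\|_{\mathrm{osc}(P_j)} \leq C\varepsilon^{1/(2(n-1))}$ --- shows the $\tilde g$-oscillation of $\tilde u$ across the entire neck is uniformly bounded, hence $\tilde u$ does not "run off to infinity" in $\tilde N$ along a neck. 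Summing the oscillation bounds over the finitely many bubbles and necks yields
\[
\mathrm{diam}_{\tilde g}\bigl(\tilde u(S^n)\bigr) \leq C(C_1, N),
\]
and $\mathcal{W}(u; S^n) \leq \mathrm{diam}_{\tilde g}(\tilde u(S^n)) < C_2$.

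The main obstacle I anticipate is making precise the claim that controlled $n$-energy forces the lift $\tilde u$ to meet only boundedly many sheets $U_l$ and copies $X_l$ of $\tilde N$. The subtlety is that a neck region, even with small $n$-energy $\varepsilon$, could a priori wind through many translates of the fundamental domain: one must use that passing from the sheet $U_l$ to an adjacent sheet $U_{l'}$ forces $\tilde u$ to cross an embedded separating sphere $S^{m-1} \subset \tilde N$, and each such crossing --- together with the $C^0$-oscillation bound on necks coming from Lemma \ref{Osc} and the no-neck differential inequality --- can be absorbed, so that in fact the image stays within a bounded number of sheets determined by the fixed energy budget $C_1/\varepsilon_g$ and the (fixed, geometry-dependent) diameter of $X \setminus V$. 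Once this "topological finiteness along small-energy necks" is established, the rest is the now-standard bubble-tree bookkeeping from \cite{DT} and \cite{WW} adapted via Lemma \ref{Osc}, and the conclusion follows by the triangle inequality in $(\tilde N, \tilde g)$.
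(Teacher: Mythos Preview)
Your approach differs substantially from the paper's, and the difference is not cosmetic. The paper argues by \emph{contradiction}: assuming a sequence of $n$-harmonic maps $u_i$ with $E_n(u_i)<C_1$ and $\mathcal W(u_i;S^n)\to\infty$, it applies the bubble-tree compactness and the no-neck result (Theorem~\ref{no_neck2}) to this \emph{sequence}. Passing to a subsequence one obtains a weak limit $u_\infty$ and finitely many bubbles $\omega_{k,l}$; $C^{1,\alpha}$ convergence of the lifts on the base and bubble regions, together with the vanishing of the neck oscillation, gives
\[
\limsup_{i\to\infty}\mathcal W(u_i;S^n)\ \le\ \mathcal W(u_\infty;S^n)+\sum_{k,l}\mathcal W(\omega_{k,l};\R^n),
\]
which is a finite number (each summand is the $\tilde g$-diameter of the image of a \emph{fixed} continuous map from the compact space $S^n$), contradicting the assumption. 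Crucially, the paper never needs a uniform bound on the right-hand side in terms of $C_1$; finiteness for that one subsequence suffices for the contradiction.

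Your direct argument tries to run a bubble/neck decomposition on a \emph{single} smooth map $u$, and this is where the gap lies. The notions ``bubble ball'' and ``neck annulus'' --- and in particular the count $C_1/\varepsilon_g$ --- are sequential: $\varepsilon_g$ is the minimal energy of a nonconstant $n$-harmonic map on all of $S^n$, so it bounds the number of bubble \emph{limits} that can split off from a sequence, not the number of pieces in a covering of $S^n$ adapted to one fixed map. A single smooth $n$-harmonic map has no singular points and no bubbles; its energy may concentrate across many nested scales without any one scale carrying $\varepsilon_g$, so there is no evident decomposition of $S^n$ into $O(C_1/\varepsilon_g)$ regions each with controlled oscillation. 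Your assertion that on each ``bubble ball'' the lift $\tilde u$ sits in one compact fundamental piece is essentially the width bound you are trying to prove, and the paragraph you yourself flag as the ``main obstacle'' (controlling how many sheets $U_l$ the lift visits along a small-energy annulus) is not resolved by Lemma~\ref{Osc} and the Pohozaev inequality alone. The paper's contradiction route sidesteps all of this: once the subsequence is fixed, $u_\infty$ and each $\omega_{k,l}$ are specific smooth maps on $S^n$, so their widths are automatically finite, and the no-neck result closes the estimate.
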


\begin{proof} We prove this by contradiction. Suppose that the statement is not true. Then we can   find a sequence of $n$-harmonic maps $\{u_i\}_{i=1}^{\infty}$ with their energy  bounded by the constant $C_1$ such that their width $\mathcal{W}(u_i; S^{n})$ can not be bounded as $i\to \infty$.

According the above bubble-neck decomposition, as $t_i \to \infty$,  it was showed in Proposition \ref{Theorem 3} that a subsequence of  $u_i$ converges  to an $n$-harmonic map $u_{\infty}$ locally in $C^{1,\a}(M\backslash\{x^1,\cdots, x^L\} )$.

At each singular point  $x_k$, there are sequences $x_{i}^{k,l}\to p^{k,l}$ for some $p^{k,l}$and $r_{i}^{k,l}\to 0$ with $\lim_{i\to \infty}\frac {r_{i}^{k,l}}{r_i^{
k,l-1}}=\infty$ such that passing to a subsequence,
$u_{i}^{k,l}(x):=u_{i} (x^{k,l}_{i}+ r_{i}^{k,l}  x)$ converges to  $\omega^{k,l}$, where $\omega_{k,l}$  is an $n$-harmonic map in   $\R^n$. These mean  that there are  finite   numbers $r_{i,k}$, finite points $x^{k,l}_i$, positive  constants $R_{k,l}$, $\delta_{k,l}$ and  finitely many number  of non-trivial $n$-harmonic
maps
$\omega_{k,l}$ on $\R^n$.  Moreover, at each neck region $B_{\delta_{k,l}} \backslash B_{R_{k,l}r_{i}^{k,l}}(x^{k,l}_i)$ in (\ref{neckenergy}), for all $i$ sufficiently large, we have
 \begin{equation}\label{Basic1} \int_{B_{2r} \backslash
B_r (x^{k,l}_{i})}{|\nabla  u_{k,l,i}|^n dv} \leq \varepsilon \end{equation}
 for all $r \in (\frac{R_{k,l}r^{k,l}_i}{4},2\delta_{k,l})$, where   $\varepsilon$ is a  fixed constant to be chosen sufficiently small.
Then
\begin{align}\label{neckenergy1}
& \lim_{i\to \infty} W(u_i; S^n) =\lim_{i\to \infty} \sup_{x,y\in S^n}\, d_{(\tilde{N},\,\tilde{g})} \left(\, \tilde{u_i }(x),\, \tilde{u_i}(y)\,\right )\\
\leq &\lim_{\delta \to 0}\lim_{i\to \infty} W\left (u_{i}; S^n\backslash \cup ^L _{k=1} B_\delta (x_k)\right )\nonumber\\
&+ \lim_{\delta_{k,l} \to 0}\lim_{i\to \infty}\sum_{k=1}^L\sum_{\tilde l=1}^{\tilde J_k}
W(u_{i}^{k,\tilde l}; B_{R_{k,\tilde l}}(0)\backslash  \cup_{j=1}^{L_{k,\tilde l}} B_{\delta_{k,l}}(x^{k,j}_i)  )\nonumber\\
&+ \sum_{k=1}^L \sum_{l=1}^{J_k} \lim_{R_{k,l} \to \infty} \lim_{\delta_{k,l} \to 0} \lim_{i \to \infty} W( u_{i}^{k,l}; B_{\delta_{k,l}} \backslash B_{R_{k,l}r_{i}^{
k,l}}(x^{k,l}_i)),\nonumber
\end{align}
where we note that $\{x_i^{k,l}\}_{l=1}^{J_k}=\cup_{\tilde l=1}^{\tilde J_k}\{x_i^{k,j}\}_{j=1}^{L_{k,\tilde l}}$ is the set of totally blowing points and that $L_{k,\tilde l}$ may not exist (This corresponds to the case of a single bubble).

Now we will estimate the width of the above region of bubbling, the neck domain and the base separately.
 Let $\tilde u_i^{k,l}$ denote the lift of $u_i^{k,l}$. Since $u_{i}^{k,l} \to \omega_{k,l}$ locally in $C^{1,\alpha }(\R^n\backslash\{p_{k,j}\}_{j=1}^{J_l} )$,  the lift $\tilde u_{i}^{k,l}$    convergence  to the lift $\tilde \omega_{k,l}$ in the covering space with lift metric $\tilde{g}$ as well, so
$$  \lim_{\delta_{k,l} \to 0}\lim _{i\to \infty}\sup_{x\in \R^n\backslash  \cup_{l=1}^{J_k} B_{\delta_{k,l}}(x^{k,l}_i))\}}  d_{(\tilde{N},\,\tilde{g})} \left(\, \tilde u_i^{k,l},\, \tilde\omega_{k,l}(x)\,\right) = 0.$$
By the triangle inequality, we have
$$  \lim_{\delta_{k,l} \to 0}\lim_{i\to \infty}\sum_{k=1}^L\sum_{l=1}^{J_k}
W(u_{i}^{k,l}; \R^n\backslash \cup_{l=1}^{J_k} B_{\delta_{k,l}}(x^{k,l}_i))\leq \sum_{k=1}^L\sum_{l=1}^{J_k}\mathcal{W}(\omega_{k,l}; \R^n).$$

Similarly,   we have
$$\limsup_{i\to\infty} \sup_{x,y\in \mathbb{R}^n\backslash \cup ^l _{k=1} B_\delta (x_k)}\, d_{(\tilde{N},\,\tilde{g})}\, \left(\tilde{u_i}(x),\,  \tilde{u_i}( y)\right) \leq \mathcal{W} (u_\infty).$$

By the no-neck result in Theorem \ref{no_neck}, we have
$$\lim_{R_{k,l} \to \infty} \lim_{\delta_{k,l} \to 0} \lim_{i \to \infty} W( u_{i}^{k,l}; B_{\delta_{k,l}} \backslash B_{R_{k,l}r_{i}^{
k,l}}(x^{k,l}_i))=0$$

These imply that
\begin{align}
\lim_{i\to \infty} W(u_i; S^n)
\leq &W\left (u_{\infty}; S^n\right )+\sum_{k=1}^L\sum_{l=1}^{J_k}
W(\omega_{k,l}; \R^n),\nonumber
\end{align}
 which is contradicted with the assumption. This proves the claim.
\end{proof}

As a consequence, we have
\begin{lem} \label{SE_width}Let $u$ be a regular solution to (\ref{n-flow}) in $M\times [0,\infty)$ with initial value $u_0$ satisfying $E_n(u_0)< C_1$ for a constant $C_1>0$. Then there is a sequence $t_i\to\infty$ such that
$u(\cdot , t_i)$ converges to an $n$-harmonic maps $u_{\infty}$  in $C_{loc}^{1,\a}(M\backslash \{x^1,...,x^L\})$.
Moreover, there exists a constant $C_3$, depending only on $C_1$,  such that the
\[\limsup_{i\to\infty}\mathcal{W}(u(\cdot , t_i); S^n)\leq  C_3.\]
\end{lem}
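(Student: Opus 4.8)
\textbf{Proof proposal for Lemma \ref{SE_width}.}
The plan is to combine Proposition \ref{Theorem 3}, which already provides the subsequential convergence $u(\cdot,t_i)\to u_\infty$ in $C^{1,\a}_{\loc}(M\setminus\{x^1,\dots,x^L\})$ together with the bubble-neck decomposition, with the bounded width Lemma \ref{bounded_width} applied to each bubble, and the no-neck conclusion of Theorem \ref{no_neck} to kill the width contribution of the neck regions. The first step is to invoke the energy inequality (Lemma \ref{energy}): since $E_n(u_0)<C_1$, we have $E_n(u(\cdot,t))\le E_n(u_0)<C_1$ for all $t$, and $\int_0^\infty\int_M|\partial_t u|^2\,dv\,dt<\infty$, so there is a subsequence $t_i\to\infty$ with $\|\partial_t u(\cdot,t_i)\|_{L^2(M)}\to 0$ and $\int_{t_i-1}^{t_i}\int_M|\partial_t u|^2\to 0$. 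Along this subsequence, Proposition \ref{Theorem 3} (equivalently Theorem \ref{no_neck}) gives the $C^{1,\a}_{\loc}$ convergence to $u_\infty$, the finitely many bubbles $\omega_{k,l}$, and the energy identity. In particular each bubble satisfies $E_n(\omega_{k,l};\R^n)\le C_1$ and $E_n(u_\infty;M)\le C_1$.

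Next I would bound the width of $u(\cdot,t_i)$ by splitting $S^n$ (identified with $M$ after choosing coordinates) into the base part $M\setminus\bigcup_k B_\delta(x^k)$, the finitely many bubble domains $B_{R_{k,l}}(0)$ in the rescaled pictures, and the neck annuli $B_{\delta_{k,l}}\setminus B_{R_{k,l}r_i^{k,l}}(x^{k,l}_i)$, exactly as in the display \eqref{neckenergy1} inside the proof of Lemma \ref{bounded_width}. On the base part, the $C^{1,\a}_{\loc}$ convergence of $u_i$ to $u_\infty$ lifts to $C^{1,\a}_{\loc}$ convergence of $\tilde u_i$ to $\tilde u_\infty$ in the covering space $(\tilde N,\tilde g)$, so $\limsup_i \mathcal{W}(u(\cdot,t_i); M\setminus\bigcup_k B_\delta(x^k))\le \mathcal{W}(u_\infty;M)$, which is finite because $u_\infty\in C^{1,\a}(M)$ is continuous on a compact manifold. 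On each bubble domain, the rescaled maps $u_i^{k,l}$ converge in $C^{1,\a}_{\loc}$ to $\omega_{k,l}$ away from its (finitely many) concentration points, so after passing to the lift and using the triangle inequality the width there is dominated by $\mathcal{W}(\omega_{k,l};\R^n)$, which is bounded by a constant depending only on $C_1$ and $N$ via Lemma \ref{bounded_width}. On the neck annuli, the no-neck result of Theorem \ref{no_neck} — precisely, the oscillation estimate $\|u_i\|_{\mathrm{osc}(B_{2\delta}\setminus B_{2r_nR})}\le C(\varepsilon+\delta^{1/C})$ established in the proof of Theorem \ref{no_neck2}, together with the fact that the lift metric $\tilde g$ is uniformly comparable to $g$ at small scales so that small oscillation in $N$ means small distance in $(\tilde N,\tilde g)$ — forces $\lim_{R\to\infty}\lim_{\delta\to0}\lim_{i\to\infty}\mathcal{W}(u_i^{k,l};B_{\delta}\setminus B_{Rr_i^{k,l}})=0$.

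Putting these three estimates together and recalling that the number $L$ of singular points and the numbers $J_k$ of bubbles are finite and, together with $E_n(u_\infty;M)$ and each $E_n(\omega_{k,l};\R^n)$, controlled solely in terms of $C_1$ (and $M$, $N$), we obtain
\begin{align*}
\limsup_{i\to\infty}\mathcal{W}(u(\cdot,t_i);S^n)\le \mathcal{W}(u_\infty;M)+\sum_{k=1}^L\sum_{l=1}^{J_k}\mathcal{W}(\omega_{k,l};\R^n)\le C_3(C_1).
\end{align*}
The only genuine point requiring care is the passage from ``small oscillation of $u$ in $N$'' to ``small distance of the lift $\tilde u$ in $(\tilde N,\tilde g)$'' on the neck: one must check that when the oscillation of $u$ over a connected neck annulus is small enough (smaller than the injectivity-radius-type constant of the covering $\tilde N\to N$), the lift does not wind around a nontrivial loop, so the diameter of $\tilde u$ of that annulus is comparable to the diameter of $u$ of it; this is where the explicit geometry of $\tilde N=$ ($\R^m$ with copies of $X\setminus V$ glued in) is used, and it is the main obstacle, though it is essentially the same argument already implicit in the proof of Lemma \ref{bounded_width}. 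Everything else is bookkeeping with finitely many convergent sequences and the triangle inequality.
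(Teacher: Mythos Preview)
Your proposal is correct and follows essentially the same approach as the paper: invoke Theorem \ref{no_neck} for the subsequential convergence and bubble-neck decomposition, then repeat the width-decomposition argument from the proof of Lemma \ref{bounded_width} (base part, bubble parts, neck parts), using Lemma \ref{bounded_width} to bound each $\mathcal{W}(\omega_{k,l};\R^n)$ and the no-neck oscillation estimate to kill the neck contributions. The paper's own proof compresses all of this into one line by citing Lemma \ref{bounded_width}, but the content is the same; your remark about lifting small $N$-oscillation on connected neck annuli to small $\tilde N$-distance is a legitimate point that the paper leaves implicit.
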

\begin{proof}
By using Theorem 1,  there exists a sequence $t_i\to \infty$ such that  $u(t_i)$ converges to an $n$-harmonic map $u_{\infty}$  in $C^{1,\a} (M\backslash \{x^1, \cdots, x^L\})$ for some positive $\a<1$.
Moreover, there are a finite number of
$n$-harmonic maps $\omega_{k,l}$ on $S^{n}$ with $k=1,...,L$ and $l=1,...,J_k$. By applying   Lemma 4.1, we have
\begin{align*}
\limsup_{i\to\infty}\mathcal{W}(u_i; S^n)&\leq  \mathcal{W}(u_{\infty}; S^n)+\sum_{k=1}^L\sum_{l=1}^{J_k} \mathcal{W}( \omega_{k,l}; S^n)\leq C_3,
\end{align*}
where $C_3$ depends on $C_2$ and total numbers of bubbles.
\end{proof}

With this bounded width lemma, we are now ready to construct the example of  the $n$-harmonic map flow with initial map $u_0: S^n\to N $ which blows up in finite time. The basic idea is as follows: We construct an initial $u_0:S^n \to N$ which has finite energy. Then we see if a map $u'$, which is homotopic to $u_0$, could have a large width.

\subsection{Proof of Theorem \ref{main_theorem}} Since  $X$ is a  closed manifold of dimension $m>n$ with
nontrivial $\pi_n(X)$, we can
find a smooth map $ h:S^n\to X$ such that
\begin{enumerate}[label=(\alph*)]
	\item $h$ is non-subjective;
	\item $h$ is not homotopic to any constant map;
	\item $h(S^n)\subset X\setminus \overline{V}$;
	\item $h$ maps the southern hemisphere of $S^n$ to a point $q\in X\setminus V$.
\end{enumerate}
For each $l=0, 1, ...$, we denote $h_l: S^n \to X_l\subset \tilde{N}$ as a copy of $h$ and $q_l\in X_l$ as a copy of $q$, and also denote by $\mathcal{S}_p$ the south pole of $S^n$.

For any large constant $K>0$, there is a sufficiently large $l$ such that
\[d_{(\tilde{N},\,\tilde{g})} \left(X_0,\, X_l\,\right)\geq K.\]
Let $q_0\in X_0$ and $q_l\in X_l$ be copies of $q$. Let $\Psi$ and $\Phi$ be two  stereographic projections from $S^n$ to $\R^n$ given by
\begin{align} &\Phi (x^1,\cdots, x^n, x^{n+1})=\left (\frac {x^1}{1-x^{n+1}}, \cdots, \frac {x^n}{1-x^{n+1}} \right ),\\
&\Psi (x^1,\cdots, x^n, x^{n+1})=\left (\frac {x^1}{1+x^{n+1}}, \cdots, \frac {x^n}{1+x^{n+1}} \right ),
\end{align}
which map the north pole $\mathcal{N}_p$ and the south pole $\mathcal{S}_p$ of $S^n$ to the infinity respectively.

In order to   construct an initial map $u_0: S^n\to N$, we  define  a map $\tilde u_0: S^n \to \tilde{N}$  by
\begin{equation}
    \tilde u_0 = \begin{cases}

               h_0(x) ,   & \text{for}\, x \in S^n\setminus B_\sigma(\mathcal{S}_p);\\
               \gamma \, \circ \, \varphi\left(\frac {\log \sigma - \log |x|  }{-\log \sigma} \right), &\text{for}\, x\in B_\sigma (\mathcal{S}_p)\setminus B_{\sigma^2}(\mathcal{S}_p);\\
               h_l\circ \Phi^{-1} \circ(\frac{\Psi (x)}{\sigma^2/ 2})  & \text{for}\, x\in B_{\sigma^2}(\mathcal{S}_p).\\
\end{cases}
\end{equation}
Here $\gamma:[0,1]\to \tilde{N}$ is the shortest geodesics connecting $q_0$ to $q_l$ in $\tilde{N}$, and $\varphi$ is a smooth cut-off function on $[0,1]$ that satisfies:
\begin{enumerate}
  \item $\varphi'$ is non-negative and $|\varphi |\leq 1$;
  \item $\varphi(x) = 0$, for $x\in \left[0,\frac{1}{8}\right]$ and $\varphi(x)=1$ for $x\in \left[\frac{7}{8}, 1\right];$
  \item $|\varphi'|\leq C$, where $C$ is a constant.
\end{enumerate}
 Under the definition of $\tilde u_0$, we can see  $\tilde u_0|_{\partial B_{\sigma} ( \mathcal{S}_p)} = q_0$. Moreover, for small $\sigma$, the metric was flattened out which gives $\tilde{u_0}|_{\partial B_{\sigma^2}(\mathcal{S}_p)} = q_l$.

Given that we have $\tilde{g}$ as the pullback metric for the covering of $(N,g)$, there exists an isometric projection map $\pi :\tilde{N}\to N $. For sufficiently small $\sigma$, we can find a smooth $u_0: S^n\to N$  defined by
\begin{equation}
    u_0 = \begin{cases}

               \pi\,\circ h_0(x),   & \text{for}\, x \in S^n\setminus B_\sigma(\mathcal{S}_p);\\
               \pi\, \circ \,\gamma \, \circ \, \varphi\left(\frac {\log \sigma - \log |x|  }{-\log \sigma} \right), &\text{for}\, x\in B_\sigma (\mathcal{S}_p)\setminus B_{\sigma^2}(\mathcal{S}_p);\\
               \pi\,\circ h_l\circ \Phi^{-1} \circ(\frac{\Psi (x)}{\sigma^2/ 2}),  & \text{for}\, x\in B_{\sigma^2}(\mathcal{S}_p).\\
\end{cases}
\end{equation}
Now we claim that there is a constant $C_1$ depending on $h_0$ such that
\begin{equation}\label{total-energy-bound}
	E_n(u_0)< E_n(h_l)+ E_n(h_0) +1 = C_1.
\end{equation}
\\
Due to the fact that $E_n(u)$ is conformally invariant,  the energy $E_n(u_0)$ over $S^n\setminus B_\sigma(\mathcal{S}_p)$ and $B_{\sigma^2}(\mathcal{S}_p)$ for small $\sigma$ can be bounded by

\begin{align}\label{energy_bound_on_the_gap}
\int_{S^n\setminus B_\sigma(\mathcal{S}_p)} |\nabla u_0| ^n \, dv +\int_{ B_{\sigma^2}(\mathcal{S}_p)} |\nabla u_0| ^n \, dv
&\leq E_n(h_0) + E_n(h_l) + \frac{1}{2}.
\end{align}

Now we have to check if the condition \eqref{total-energy-bound} is satisfied. We do this by estimating the energy over $B_\sigma (\mathcal{S}_p)\setminus B_{\sigma^2}(\mathcal{S}_p)$, then compare it with \eqref{energy_bound_on_the_gap}.

Let $L$ be the shortest distance between $q_0$ and $q_l$. Since $\gamma$ is the shortest geodesics connecting $q_0$ to $q_l$ in $\tilde{N}$,
there is a parametrization $\tilde s$ such that
\begin{align*}
 \int_0^1 |(\pi\,\circ\,\gamma)'| \,d\tilde s = d_{(\tilde{N},\tilde{g})} ( q_0, q_l)=L,\quad  |(\pi\,\circ\,\gamma)'|  =  L.
\end{align*}

Therefore, we have

\begin{align*}
  |\partial_r\, u_0| &\leq |(\pi\,\circ\,\gamma)' | \,| \varphi' | \,  \frac{1}{r(-\log \sigma)}
  \leq \frac{CL}{r(-\log\sigma)}
\end{align*}
which gives us to estimate the energy of $u_0$ on the  annulus  domain; i.e.
\begin{align*}
  &\int_{B_\sigma\setminus B_{\sigma^2}} |\nabla u_0|^n \, dx \leq C \int_{\sigma^2}^{\sigma}  |\partial_r\, u_0| ^n \,r^{n-1}\, dr\\
  &\leq \frac{CL^n}{(-\log \sigma)^n} \int_{\sigma^2}^{\sigma} \frac{1}{r}\,dr\leq \frac{CL^n}{(-\log \sigma)^{n-1}}.
\end{align*}
Therefore, the energy on the  annulus domain $B_\sigma\setminus B_{\sigma^2}$ can be controlled for any $L$ with a sufficiently small $\sigma$. Together with \eqref{energy_bound_on_the_gap}, we obtained an upper bound $C_1$ for $E(u_0)$.

Now, for any $u'$ (with a lift $\tilde u'$) which is homotopic to $u_0$, we claim that $\tilde u'$ intercepts with $X_0$ and $X_l$, which implies
\[\mathbf{\mathcal{W}}(u'; S^n)\geq d_{(\tilde{N},\,\tilde{g})}   \left(X_0,\, X_l\,\right)\geq K > C_3.\]
We prove this claim by contradiction. Assume that $\tilde u'$ does not intercept  with $X_0$.  Set a continuous map $\overline{\pi}: \tilde{N} \to X$ so that $\overline{\pi}$ maps $\tilde{N}\setminus X_0$ to a single point $p\in X$. Since  $u'\cap X_0 =\emptyset$, it follows that  $\overline{\pi} \,\circ\, \tilde{u'} $ maps to $p$ which is a constant map. However, consider $\overline{\pi} \,\circ\, \tilde u'$ is homotopic to $\overline{\pi} \,\circ\, \tilde u_0$ which is homotopic to $h_0$ as well. This contradicts with the property (b) of the  definition of $h_0$. This shows that $\tilde u'$ must intercept  with $X_0$.
By a similar argument,  $\tilde u'$ must intercept  with $X_l$.

Assume that the $n$-harmonic map flow with initial value $u_0$ does not blow  up in finite time.  Let $u(x,t)$ be a  regular solution to the flow (\ref {n-flow}) in $M\times [0,\infty)$ with initial value $u_0\in C^1(M, N)$. By Theorem 1, there is a sub-sequence  $t_i$  such that as $t_i\to \infty $, $u(x, t_i)$ converges to an $n$-harmonic map $u_{\infty}$  in $C_{loc}^{1,\a} (M\backslash \{x^1, \cdots, x^L\})$ for some positive $\a<1$.
Since $u_i:=u(x, t_i)$ is  homotopic to $u_0$, we have
\[\mathbf{\mathcal{W}}(u_i; S^n)\geq  K > C_3.\]
On the other hand, by Lemma \ref{SE_width}, $\limsup_{i\to \infty}\mathbf{\mathcal{W}}(u_i; S^n)\leq C_3$. This is a contradiction.
Therefore, we have constructed an initial maps $u_0 : S^n \to N$ such that the $n$-harmonic map flow with initial value $u_0$ must blow  up in finite time. This completes a proof of Theorem 2.\qed

\medskip \noindent{\bf Remark.}
{\it It is an interesting question whether the  the heat flow for
$H$-systems (\cite {HH}) on $n$-manifolds  blows up in finite time for $n\geq 3$.} \medskip

\begin{acknowledgement}{The first author was supported by a top-up PhD scholarship in the Australian Research Council
grant DP150101275. The research  of the second
author was supported by the Australian Research Council
grant DP150101275. }
\end{acknowledgement}

\today

\end{document}